\newtheorem{theorem}{Theorem}
\newtheorem{example}{Example}
\newtheorem{definition}{Definition}
\newtheorem{lemma}{Lemma}
\newtheorem{corollary}{Corollary}
\newtheorem{proposition}{Proposition}
\newtheorem{notation}{Notation}
\title{Besicovitch-Morse type  covering lemmas\\ in metric spaces}
\author{Tong Zhang}
\date{}
\begin{document}

\renewcommand{\theequation}{\thesection.\arabic{equation}}
\setcounter{equation}{0} \maketitle

\vspace{-1.0cm}

\bigskip

\!\!\!\!\!\!\!\!{\bf Abstract.} The aims of this article is to generalize some useful Besicovitch-Morse type covering lemmas in complete Riemannian manifolds and try to find more spaces such that the so-called BCP and WBCP are equivalent while these two properties are weaker and still useful. We also get interest in the best constants of Besicovitch-type covering properties in Euclidean spaces and sorted out the best results of related problems before giving a new proof of Besicovitch covering theorem in the one-dimensional case.

\medskip

\!\!\!\!\!\!\!\!{\bf Key words:} covering lemmas, BCP, WBCP, Riemannian manifold, Metric spaces

\medskip

\renewcommand{\theequation}{\thesection.\arabic{equation}}

\section{Introduction}

\ \ \ \ Covering theorems are fundamental tools of measure theory and classical analysis among the past several decades. One of the most important covering theorems is the Besicovitch covering theorem which is first established for the plane by A.S.Besicovitch[4][5] and extended by A.P.Morse[22] to more general sets in finite dimensional normed vector spaces. H.Federer[13] has shown the validity of it in some special metric spaces. The Besicovitch covering theorem has some weaker versions which are more convenient to check. Here we consider the so-called Besicovitch covering property, BCP for short, and weak Besicovitch covering property, WBCP for short, while they are named and investigated by S.Rigot[8][9][10][14][24] in Lie groups. The validity of BCP in a metric space can deduce the Lebesgue Differentiation Theorem and the validity of WBCP can deduce the uniform weak type (1,1) of the centered maximal operator. In[1], J.M.Aldaz investigates more Besicovitch type properties in some special metric spaces including doubling spaces and ultrametric spaces. J.Jost, V.L.Hong. and T.T.Tran[16] give a proof of a Besicovitch-type covering theorem in complete Riemannian manifolds.\\

In Section 2, we will give some standard notions and definitions used in the next proofs.
In Section 3, we will use a more essential way to prove the Besicovich covering theorem in complete Riemannian manifolds. At first, we show that the validity of  BCP implies the validity of Besicovitch covering theorem. Then we also show that the validity of  WBCP implies the validity of BCP. Hence we just need to check the validity of WBCP in complete Riemannian manifolds which is much easier and then we also deduce the differentiation theorem for any Radon measure in complete Riemannian manifolds. And then we generalize the Besicovitch-Morse type covering theorems to complete Riemannian manifolds. In section 4, we introduce a new property of metric spaces and then prove that BCP and WBCP are equivalent in any metric spaces satisfying this property. This property is not difficult to satisfy and we will give a proof of the validity of this property in finite dimensional Euclid spaces. In section 5, we sorted out the best results of the best constants of the Besicovitch-type properties in Euclidean spaces which are related to some interesting problems in geometry. Here we give a new proof of the best constant of Besicovitch covering theorem in the one-dimensional case.

\section{Some standard notions and  definitions}

\ \ \ \ Throughout the paper $(X,d)$ is a metric space. $B({x},{r})$ denotes a closed metric ball with center $x\in X$ and radius $r\in \mathbb{R}$ and $\partial B({x},{r})$ denotes the surface of $B({x},{r})$. \\

$(M,\rho)$ is a Riemannian manifold. The notation $in{j_x}M$ denotes the injectivity radius of $x\in M$ and $exp_a$ denotes the exponential map with respect to $x$.\\ 

${{\chi _A}}$ denotes the characteristic function of the set $A$.\\

$diamA$ denotes the diameter of the set $A$.\\

$CardA$ denotes the cardinality of the set $A$.\\

$min\{A,B\}$ denotes the minimum of the elements of the set $\{A,B\}$\\

${\mathbb{R}^n}$ denotes $n$-dimensional real Euclidean space.

\begin{definition}
	\rm A collection of balls ${\cal B} = \{ B({x_i},{r_i})\} _{i = 1}^n$ in a metric space $(X,d)$ is said to be  a \it Besicovitch family \rm if \[\bigcap\limits_{i = 1}^n {B({x_i}} ,{r_i}) \ne \emptyset \] and for every pair of distinct balls $B({x_i},{r_i})$, $B({x_j},{r_j}) \in \cal B $, we have ${x_j}\notin B({x_i},{r_i})$ and ${x_i}\notin B({x_j},{r_j})$.\\
\end{definition}

\begin{definition}
	\rm (BCP) A metric space $(X,d)$ has \it the Besicovitch covering property \rm if there exists a constant C such that for every $R>0$, every set (or bounded set for some versions) $A\subset X$ and every collection of balls \[{\cal F}=\{B(x,r): x\in A, r \leqslant R\},\] such that each point of $A$ is the center of some ball of $\mathcal{F}$,
	we can find a subcollection $\mathcal{F}'\subset {\cal F} $ such that \[{\chi _A} \leqslant \sum\limits_{B \in \mathcal{F}'} {{\chi _B}}  \leqslant C.\]
\end{definition}

\begin{definition}
	\rm (WBCP) A metric space $(X,d)$ has \it the weak Besicovitch covering property \rm if there exists a constant $L$ such that $card\ \mathcal{B}\leqslant L$ for every Besicovitch family $\mathcal{B}$.
\end{definition}

\begin{definition}\rm ($\epsilon$-net)
	$(X,d)$ is a metric space. A \it$\epsilon$-net(resp. strict $\epsilon$-net) \rm is a subset $S\subset X$ such that for any pair of distinct points $x,y\in S$, we have $d(x,y)\geqslant r$ (resp. $d(x,y)>r$).
\end{definition}

\section{Covering properties in complete Riemannian manifolds}

\subsection{A new proof of Besicovitch-type theorem in complete Riemannian manifolds}

\begin{theorem}
\rm $(M,\rho) $ is a complete Riemannian manifold and $A$ is a bounded subset of $M$. There exists ${C_3}(A)\in \mathbb{N}$ such that for arbitrary collection of geodesic balls \[{\cal F}=\{ B({x},{r}):{x} \in A, {r} \leqslant \frac{1}{4}\mathop {\inf }\limits_{x \in A} in{j_x}M\}\] and $S$ denoting the centers of the balls in $\cal F$, we can find a $N \in \mathbb{N}$ not exceeding  ${C_3}(A)$ and subcollections ${\mathcal{F}_1},{\mathcal{F}_2}, \cdots ,{\mathcal{F}_N}$ such that $S$ is covered by the balls of these subcollections and each subcollection consists of at most countably many disjoint balls.                
\end{theorem}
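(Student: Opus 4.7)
The plan is to reduce to the classical Besicovitch--Morse partition theorem in $\mathbb{R}^n$ via a finite atlas of exponential charts of small bi-Lipschitz distortion. Since $(M,\rho)$ is complete and $A$ bounded, the Hopf--Rinow theorem gives that $\overline{A}$ is compact, and the continuity of the injectivity radius on $M$ forces $r_0:=\tfrac14\inf_{x\in A}\mathrm{inj}_x M=\tfrac14\inf_{x\in\overline{A}}\mathrm{inj}_x M>0$. Fix a small $\varepsilon>0$ (to be tuned later). By the Rauch comparison theorem, with uniform sectional curvature bounds on a compact neighborhood of $\overline{A}$, one can choose $\delta\in(0,r_0)$ so small that for every $p\in\overline{A}$ the map $\exp_p$ is an $(1+\varepsilon)$-bi-Lipschitz diffeomorphism from the Euclidean ball of radius $2r_0+\delta$ in $T_pM\cong\mathbb{R}^n$ onto its image. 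By compactness of $\overline{A}$, cover it by finitely many geodesic balls $B(p_1,\delta),\ldots,B(p_K,\delta)$ with $p_k\in\overline{A}$.

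Partition the centers as $S=\bigsqcup_{k=1}^K S_k$ with $S_k\subseteq A\cap B(p_k,\delta)$, so that every ball $B(x,r_x)\in\mathcal{F}$ with $x\in S_k$ lies inside the image of the chart at $p_k$. Setting $\tilde x:=\exp_{p_k}^{-1}(x)$, apply the classical Besicovitch--Morse partition theorem in $\mathbb{R}^n$ to the inflated family $\{B_{\mathrm{eucl}}(\tilde x,(1+\varepsilon)r_x):x\in S_k\}$ to obtain a partition into at most $N(n)$ subcollections of pairwise disjoint Euclidean balls. The $(1+\varepsilon)$-bi-Lipschitz inequality then transfers disjointness back to $M$: disjointness of Euclidean balls of radii $(1+\varepsilon)r_{y_1},(1+\varepsilon)r_{y_2}$ yields $|\tilde y_1-\tilde y_2|\geq(1+\varepsilon)(r_{y_1}+r_{y_2})$, and hence $d(y_1,y_2)\geq r_{y_1}+r_{y_2}$, so $B(y_1,r_{y_1})$ and $B(y_2,r_{y_2})$ are disjoint in $M$. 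Because the Euclidean output is a partition, every geodesic ball $B(x,r_x)$ with $x\in S_k$ sits in some pulled-back subcollection, and coverage of $x$ is automatic via its own ball. Concatenating over $k=1,\ldots,K$ produces at most $K\cdot N(n)$ subcollections of disjoint geodesic balls covering $S$, so one may set $C_3(A):=K\cdot N(n)$.

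The principal difficulty is engineering a uniform bi-Lipschitz constant for the exponential charts across all of $\overline{A}$: one must simultaneously bound the distortion (via curvature control on the compact set $\overline{A}$) and guarantee that every admissible geodesic ball $B(x,r_x)$ fits inside one chart (using the radius bound $r_x\leq r_0$, which in turn rests on completeness of $M$ via Hopf--Rinow). Once this setup is in place, the passage from the Euclidean partition theorem to the Riemannian statement is essentially bookkeeping, and the resulting $C_3(A)$ depends only on $\dim M$, a curvature bound in a neighborhood of $\overline{A}$, and the covering number of $\overline{A}$ at scale $\delta$.
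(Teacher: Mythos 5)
Your reduction to the Euclidean Besicovitch--Morse theorem via exponential charts has two genuine gaps, and the second one is fatal to the strategy as written. First, the claim that one can make $\exp_p$ a $(1+\varepsilon)$-bi-Lipschitz map \emph{on a ball of radius $2r_0+\delta$} for arbitrarily small $\varepsilon$ by shrinking $\delta$ is false: the domain radius is bounded below by the fixed quantity $2r_0$, and at that scale the distortion of $\exp_p$ is governed by the curvature bound times $r_0^2$, so you only get some fixed constant $L>1$, not $1+\varepsilon$. Second, and more seriously, the transfer of the Besicovitch selection between Euclidean balls in the chart and geodesic balls in $M$ does not work. The classical theorem does \emph{not} partition the entire family into $N(n)$ disjoint subfamilies (that statement is false already for infinitely many balls sharing a common point); it only \emph{selects} a subfamily covering the centers. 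So your sentence ``coverage of $x$ is automatic via its own ball'' has no basis: $x$'s own ball is typically discarded, and $x$ is only covered by some other selected inflated ball $B_{\mathrm{eucl}}(\tilde y,(1+\varepsilon)r_y)$, which after pulling back gives only $d(x,y)\leqslant(1+\varepsilon)^2 r_y$, not $d(x,y)\leqslant r_y$. Quantitatively, transferring coverage forces the Euclidean radius $\rho_x\leqslant r_x/(1+\varepsilon)$ while transferring disjointness forces $\rho_x\geqslant(1+\varepsilon)r_x$; these are incompatible for any $\varepsilon>0$, and no tuning of the inflation factor resolves it. This is not a removable technicality: Besicovitch-type covering properties are known to be unstable under arbitrarily small bi-Lipschitz changes of the distance, so an argument that uses only the bi-Lipschitz quality of the charts cannot succeed.

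The paper avoids this by arguing intrinsically: Theorem 2 establishes WBCP for bounded subsets of a complete Riemannian manifold via an angle-comparison estimate at a common point of a Besicovitch family (the directions to the centers form a separated net on a small geodesic sphere, bounded by Bishop comparison); Proposition 2 upgrades WBCP to BCP by a greedy selection within dyadic radius classes; and Proposition 1 converts BCP into the disjoint-subfamily statement by bounding the size of $\alpha$-configurations with a volume-comparison packing argument. If you want to salvage a chart-based proof, you would need to use more than the bi-Lipschitz bound --- for instance, that $\exp_p$ sends geodesics through $p$ to rays and nearly preserves angles at $p$ --- which is in effect what the paper's angle argument does directly on the manifold.
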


Theorem 1 is obvious after we attain Proposition 1 , Proposition 2 and Theorem 2.\\ 

\begin{proposition}
	\rm	(BCP $\Rightarrow$ BC Thm)$  \ (M,\rho)$ is a complete Riemannian manifold and $A$ is a bounded subset of $M$. If there exists a constant ${C_2}(A)$ such that for arbitrary collection of balls \[{\cal F}=\{B(x,r): x\in A, r \leqslant \dfrac{1}{4}\mathop {\inf }\limits_{x \in A} in{j_x}M\}\] and $S$ denoting the centers of the balls in $\cal F$,
	we can find a subcollection $\mathcal{F}'\subset {\cal F} $ such that \[{\chi _S} \leqslant \sum\limits_{B \in \mathcal{F}'} {{\chi _B}}  \leqslant {C_2}(A).\]
	
	Then there will exists a constant ${C_3}(A)$ such that we can find a $m \in \mathbb{N}$ not exceeding ${C_3}(A)$ and $m$ subcollections  ${\mathcal{F}_1},{\mathcal{F}_2}, \cdots ,{\mathcal{F}_m}$ of disjoint balls such that\[S \subset \bigcup\limits_{i = 1}^m {\bigcup\limits_{B \in {\mathcal{F}_i}} B }.\]
\end{proposition}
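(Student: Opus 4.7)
The plan is to follow the classical two-stage Besicovitch strategy. First, apply the hypothesis directly to $\mathcal{F}$ to obtain $\mathcal{F}' \subset \mathcal{F}$ with $\chi_S \leq \sum_{B \in \mathcal{F}'} \chi_B \leq C_2(A)$, producing a bounded-multiplicity sub-cover of the centers. Second, decompose $\mathcal{F}'$ into finitely many pairwise disjoint subfamilies by a greedy coloring.

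For the coloring, well-order $\mathcal{F}'$ so that radii are non-increasing (invoking the well-ordering theorem if $\mathcal{F}'$ is uncountable) and process the balls in order: each ball $B$ is placed in the lowest-indexed family $\mathcal{F}_k$ such that $B$ is disjoint from every ball already assigned to $\mathcal{F}_k$. By construction every $\mathcal{F}_k$ consists of pairwise disjoint balls, and their union still covers $S$.

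The technical heart of the proof is to bound the number of families used. Suppose $B = B(x,r)$ is assigned color $k$; then for each $j<k$ there is a previously-placed blocker $B_j = B(x_j,r_j) \in \mathcal{F}_j$ with $r_j \geq r$ and $B_j \cap B \neq \emptyset$. Split the blockers into two groups: (a) those with $x \in B_j$, and (b) those with $x \notin B_j$. Group (a) is bounded by $C_2(A)-1$ directly from the multiplicity bound at $x$, since $B$ itself contains $x$. For group (b), the inequalities $r_j < d(x,x_j) \leq r+r_j \leq 2 r_j$ force the centers $x_j$ into a thin annulus around $x$; passing to the exponential chart at $x$, which is legitimate thanks to the radius restriction $r_j \leq \tfrac{1}{4}\inf_{x \in A}\mathrm{inj}_x M$, an angular argument in $T_xM$ bounds the cardinality of group (b) by a constant $c_n$ depending only on $n=\dim M$. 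Combining, $k \leq C_2(A)+c_n$, so we may take $C_3(A) = C_2(A)+c_n$.

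The main obstacle is executing the angular/separation argument in the Riemannian setting rigorously: one must use the radius restriction to control the distortion of distances and angles under $\exp_x^{-1}$, and then show that two blockers in group (b) whose directions from $x$ are too close would force one center into the other ball, contradicting either the greedy selection rule or the bounded multiplicity hypothesis. A secondary technical point is the clean handling of a possibly uncountable $\mathcal{F}'$ during the greedy coloring, which is handled by transfinite induction.
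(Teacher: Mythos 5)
Your overall architecture matches the paper's: extract the bounded-multiplicity subfamily $\mathcal{F}'$ from the hypothesis, decompose it greedily into disjoint colour classes, and bound the number of classes by analysing the balls that block a given $B(x,r)$ from entering an earlier class. The paper does the same, except that instead of a radius-ordered transfinite greedy colouring it stratifies $\mathcal{F}'$ into radius shells $\mathcal{D}^n$ with $r\in(\alpha^nR,\alpha^{n-1}R]$ and extracts maximal disjoint families shell by shell, so that a leftover ball meets one blocker of comparable-or-larger radius from each earlier class (an ``$\alpha$-configuration''). Up to that point your proposal is sound, modulo the secondary issue that a well-ordering of $\mathcal{F}'$ with genuinely non-increasing radii need not exist when the radii do not attain their suprema; the shell stratification is the standard repair.

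The genuine gap is in your treatment of group (b). It is not true that the blockers $B_j=B(x_j,r_j)$ with $x\notin B_j$, $B_j\cap B\neq\emptyset$ and $r_j\geq r$ are bounded in number by a constant $c_n$ depending only on the dimension: already in $\mathbb{R}^2$ one can place many balls of radius $100$ with centres at distance $100.5$ from $x$ in evenly spread directions; all of them meet $B(x,1)$, none contains $x$, and the only thing limiting their number is the multiplicity bound (a point at distance about $10$ from $x$ lies in every such ball whose direction is within roughly $84^{\circ}$ of its own). So the correct bound for group (b) has the form $c_n\cdot C_2(A)$, not $c_n$, and your proposed mechanism --- directions too close force one centre into the other ball, contradicting the greedy rule or the multiplicity bound --- does not close the argument: blockers from different colour classes are under no centre-noncontainment constraint (they come from the BCP hypothesis, not from a Vitali-type selection), and a single containment $x_j\in B_{j'}$ contradicts nothing. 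What is actually needed is to show that blockers whose directions cluster must all contain a common point, and then use the multiplicity bound to cap each cluster at $C_2(A)$. This is exactly what the paper's Claim 2 does: Lemma 1 replaces each blocker $B(x_i,r_i)$ by a sub-ball $B(z_i,\frac{r}{\alpha})$ of \emph{uniform} radius still meeting $B(x,r)$; nearby $z_i$'s are ``absorbed'' in groups of at most $C_2(A)$, since each $z_i$ lies in at most $C_2(A)$ balls of $\mathcal{F}'$; and the surviving $z_i$'s form a strict $\frac{r}{\alpha}$-net in $B(x,r+\frac{r}{\alpha})$ whose cardinality is bounded by Bishop volume comparison. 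The resulting constant is multiplicative, $C_3(A)=C_2(A)\cdot C_\alpha(A)+1$; your additive $C_2(A)+c_n$ is not achievable by this route.
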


\begin{lemma}
	\rm $(M,\rho)$ is a complete Riemannian manifold. For any geodesic ball $B(x,r)$ with $r<{{in{j_x}M}}$, $y\in B(x,r)$	and $0<s<r$, we can find a $z\in B(x,r)$ such that \[y \in B(z,s) \subset B(x,r).\]
\end{lemma}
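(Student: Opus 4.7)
The plan is to mimic the obvious Euclidean argument: slide from $y$ toward $x$ along the unique minimizing geodesic, and stop either at $y$ itself (if $y$ is already deep enough inside $B(x,r)$) or at the point on the geodesic whose distance from $x$ equals $r-s$. The hypothesis $r < \mathrm{inj}_x M$ guarantees that the exponential map $\exp_x$ is a diffeomorphism on the ball of radius $r$ in $T_xM$, so every point $y \in B(x,r)$ is joined to $x$ by a unique minimizing geodesic, and distances along that geodesic are additive.

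First I would fix the radial geodesic. Let $\gamma : [0,d(x,y)] \to M$ be the unit-speed geodesic with $\gamma(0)=x$ and $\gamma(d(x,y))=y$; by the injectivity radius assumption $\gamma$ is minimizing and $d(\gamma(a),\gamma(b)) = |b-a|$ for all $a,b \in [0,d(x,y)]$. The construction of $z$ then splits into two cases:

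\emph{Case 1: $d(x,y) \le r-s$.} Take $z=y$. Clearly $y \in B(z,s)$, and for any $w \in B(z,s)$ the triangle inequality gives
\[
\rho(w,x) \le \rho(w,y) + \rho(y,x) \le s + (r-s) = r,
\]
so $B(z,s) \subset B(x,r)$, and $z=y \in B(x,r)$.

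\emph{Case 2: $r-s < d(x,y) \le r$.} Set $z=\gamma(r-s)$, which satisfies $\rho(z,x) = r-s < r$, hence $z \in B(x,r)$. By additivity of distances along the minimizing geodesic,
\[
\rho(y,z) = d(x,y) - (r-s) \le r-(r-s) = s,
\]
so $y \in B(z,s)$; and the same triangle inequality as above shows $B(z,s) \subset B(x,r)$.

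The only real subtlety is the appeal to the injectivity radius to ensure both the existence and uniqueness of the minimizing geodesic $\gamma$, and the additivity of arc length along it; once that is in place the rest is a routine triangle-inequality check, so I do not expect any genuine obstacle.
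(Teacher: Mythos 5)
Your proof is correct and follows essentially the same idea as the paper: slide along the minimizing radial geodesic (which exists and is unique because $r<\mathrm{inj}_xM$) and verify $B(z,s)\subset B(x,r)$ by the triangle inequality. In fact your version is slightly more careful than the paper's, which directly places $z$ at distance $s$ from $y$ along the geodesic toward $x$ and therefore silently assumes $\rho(x,y)\geqslant s$; your Case 1 covers the situation where $y$ is too close to $x$ for that construction to make sense.
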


\begin{proof}[\textbf{\textit{Proof of lemma 1}}]
Here $\gamma$ denotes the minimal geodesic connected by $x$ and $y$ with $\gamma (0) = y$ and $\gamma (1) = x$. Thus $\rho(x,y)=Length\ \gamma$ and there exists ${t_0}$ and $z\in \gamma$ such that 

\[\gamma ({t_0}) = z,\ \rho(y,z)=s,\  \rho(x,z)=\rho(x,y)-s.\]

For every $w\in B(z,s)$,
\[\rho (w,x) \leqslant \rho (w,z) + \rho (z,x) \leqslant s + \rho (x,y) - s = \rho (x,y) \leqslant r.\]
\end{proof}

\begin{definition} 
	\rm For an arbitrary ball $B(x,r)$, a collection of balls $\{ B({x_i},{r_i})\} _{i = 1}^n$ is said to be an $\alpha$-\textit{configuration} with respect to $B(x,r)$ if\[B({x_i},{r_i}) \cap B(x,r) \ne \emptyset \quad and   \quad r < \alpha {r_i}\] for each $i \in\{ 1,2, \cdots ,n\}$.\\	
\end{definition}

\begin{proof}[\textbf{\textit{Proof of Proposition 1}}]
    Fix a collection of balls\[{\cal F}=\{B(x,r): x\in A, r \leqslant \dfrac{1}{4}\mathop {\inf }\limits_{x \in A} in{j_x}M\}.\]$S$ denotes the centers of the balls in $\cal F$. Then we can find a subcollection $\mathcal{F}'\subset {\cal F} $ such that \[{\chi _S} \leqslant \sum\limits_{B \in \mathcal{F}'} {{\chi _B}}  \leqslant {C_2}(A).\]
    For $\forall\  \alpha  \in (\dfrac{1}{2},1)$, set \[{\mathcal{D}^{{n}}}{{ = \{ B(x,r)}} \in \mathcal{F}', r \in ({\alpha ^{n}}R,{\alpha ^{n-1}}R]{\text{\} }},\] while $R$ denotes $\dfrac{1}{4}\mathop {\inf }\limits_{x \in A} in{j_x}M.$  \\ 
    
    Choose a largest family of disjoint balls in ${\mathcal{D}^{{1}}}$ and then  inductively choose a largest family of disjoint balls in ${\mathcal{D}^{{n}}}$  such that each ball of this family does not intersect any chosen ball in $\{ {\mathcal{D}^i}\} _{i = 1}^{n - 1}$ for $n>2$.
    ${\mathcal{F}^{{1}}}$ denotes all the chosen balls in $\{ {\mathcal{D}^i}\} _{i = 1}^{\infty}$.\\
    
    After we get a collection of disjoint balls ${\mathcal{F}^{{1}}}$, repeat the above procedure for the left balls in $\mathcal{F}'$ to get  $\{\mathcal{F}^i\} _{i = 2}^{\infty}$ inductively. Suppose that $\{\mathcal{F}^i\} _{i = 1}^{k}$ have been defined and\[\mathcal{F}'\backslash \bigcup\limits_{i = 1}^k {{F_i}}  \ne \emptyset .\]  
    
    Then there must be a ball $B(x,r)\in F\backslash \bigcup\limits_{i = 1}^k {{F_i}}$ and thus we can find $B(x_i,r_i)\in \mathcal{F}^i$ such that \[B(x_i,r_i)\cap B(x,r)\ne \emptyset \ \ and\ \  r_i>\dfrac{1}{\alpha}r\] for each $i$. This is because there must be some $n$ such that $s\in({\alpha ^{n}}R,{\alpha ^{n-1}}R]$ and then we can find a $B(x_i,r_i)\in \mathcal{D}^j$ for some $j<n+1$ such that $B(x_i,r_i)\cap B(x,r)\ne \emptyset$. \\
    
    Thus $\{ B({x_i},{r_i})\} _{i = 1}^k$ is an $\alpha$-configuration with respect to $B(x,r)$.\\
    
    \!\!\!\!\!\!\!\!\!\!\it \textbf{Claim 1:} There exists a $C_{\alpha}(A)$ such that the cardinality of a strict $\dfrac{r}{\alpha}$-net in $B(x,r+\dfrac{r}{\alpha})$ can not exceed $C_{\alpha}(A)$ for any ball $B(x,r)\in \mathcal {F}$.\\

    \!\!\!\!\!\!\!\!proof of claim 1 : \rm For an strict $\dfrac{r}{\alpha}$-net $\{ {x_1},{x_2}, \cdots ,{x_k}\} $ in $B(x,r+\dfrac{r}{\alpha})$, we consider a collection of disjoint balls $\{ {B(x_1,\dfrac{r}{2\alpha})},{B(x_2,\dfrac{r}{2\alpha})}, \cdots ,{B(x_k,\dfrac{r}{2\alpha})}\} $ contained in $B(x,r+\dfrac{3r}{2\alpha})$.
    
    Since $\alpha  \in (\dfrac{1}{2},1)$ and $r \leqslant \dfrac{1}{4}\mathop {\inf }\limits_{x \in A} in{j_x}M$, we have \[r+\dfrac{3r}{2\alpha}\leqslant \mathop {\inf }\limits_{x \in A} in{j_x}M.\] Thus there exists two constants $M(A)$ and $N(A)$ such that  \[vol B(x,r+\dfrac{3r}{2\alpha})\leqslant M(A)\cdot {{(r+\dfrac{3r}{2\alpha}})}^{dim(M,\rho)}\] 
    and \[vol B(x_i,\dfrac{r}{2\alpha}) \geqslant  N(A)\cdot {{(\dfrac{r}{2\alpha}})}^{dim(M,\rho)} \ for \ \ \forall \ i. \]  by the Bishop Comparison theorem. Here
    $vol B$ denotes the Riemannian volume of the set $B$ and
    $dim(M,\rho)$ denotes the dimension of the Riemannian manifold $(M,\rho)$ \\
    
    Thus \[k \leqslant {(2\alpha  + 3)^{dim(M,\rho)}}\]
    This completes the proof of Claim 1. \\
    
    \!\!\!\!\!\!\!\!\!\!\it \textbf{Claim 2:} There exists a $C(A)$ such that the cardinality of any $\alpha$-\textit{configuration} in $\mathcal{F}'$ can not exceed $C(A)$ . \\
    
    \!\!\!\!\!\!\!\!proof of claim 2 : \rm For an arbitrary $\alpha$-\textit{configuration} ${{\{ B(}}{{{x}}_i}{{,}}r_i{{)\} }}_{i = 1}^k$ with respect to $B(x,r)$ in $\mathcal{F}'$, we can find a ball $B(z_i,\dfrac{r}{\alpha})\subset B(x_i,r_i)$ such that $B(z_i,\dfrac{r}{\alpha})\cap B(x,r)\ne \emptyset$ for each $i$ because of lemma 1 and then \[\rho ({z_i},x) < (1 + \frac{1}{\alpha })\ r\ \ for \ \ \forall\ i .\]
    
    For each $i$, we make every ball in ${{\{ B(}}{{{z}}_i}{{,}}\frac{r}{\alpha }{{)\} }}_{i = 1}^k$ containing $z_i$ absorbed into $B(z_i,\dfrac{r}{\alpha})$ and the number of the absorbed balls  is at most $C_2(A)$. At last, there will be a family of balls whose centers consist of a strict $\dfrac{r}{\alpha}$-net in $B(x,r + \frac{1}{\alpha }r)$ left. And the cardinality of it can not exceed $C_\alpha(A).$
    Thus the original $\alpha$-\textit{configuration} with respect to $B(x,r)$ consists of at most  $C_2(A) \cdot C_\alpha(A)$ balls.\\
    
    This completes the proof of Claim 2. \\
    
    Then there exists a $m \in \mathbb{N}$ not exceeding $C_2(A) \cdot C_{\alpha}(A)+1$ and $m$ subcollections  ${\mathcal{F}_1},{\mathcal{F}_2}, \cdots ,{\mathcal{F}_m}$ of disjoint balls such that\[A \subset \bigcup\limits_{i = 1}^m {\bigcup\limits_{B \in {\mathcal{F}_i}} B }.\]
\end{proof}

\begin{proposition}
    \rm	(WBCP $\Rightarrow$ BCP)\ $(M,\rho) $ is a complete Riemannian manifold and $A$ is a bounded subset of $M$. Suppose that there exists ${C_1}(A)\in [1, \infty)$ such that $Card\;\cal B \leqslant$$ {C_1}(A)$ 
    \\\\for every  Besicovich family $\cal B$ $=\{ B({x_i},{r_i}):{x_i} \in A, {r_i} \leqslant \dfrac{1}{4}\mathop {\inf }\limits_{x \in A} in{j_x}M\}$ in $(M,\rho)$.\\\
    
    Then for any collection of geodesic balls \[{\cal F}=\{ B({x},{r}):{x} \in A, {r} \leqslant \dfrac{1}{4}\mathop {\inf }\limits_{x \in A} in{j_x}M\}\]and $S$ denoting the centers of the balls in $\cal F$,
    we can find a subcollection $\mathcal{F}'\subset\mathcal{F}$ such that \[{\chi _S} \leqslant \sum\limits_{B \in \mathcal{F}'} {{\chi _B}}  \leqslant {C_2}(A).\]  Besides, ${C_2}(A)={C_1}(A)\cdot {C_0}(A)+1$ and ${C_0}(A)$ denotes the largest cardinality of strict $r$-net in $B(x,4r)$ for $x\in A$ and $r<\dfrac{1}{4}\mathop {\inf }\limits_{x \in A} in{j_x}M$.
\end{proposition}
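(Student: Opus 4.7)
The plan is to construct $\mathcal{F}'$ by a greedy selection organized by radius scales, then bound its multiplicity at each point by combining a within-scale net estimate with a cross-scale Besicovitch family estimate that invokes WBCP. Set $R = \frac{1}{4}\inf_{x \in A}\mathrm{inj}_x M$ and partition
\[
\mathcal{F}^n = \bigl\{B(x,r) \in \mathcal{F} : r \in (R/2^{n+1},\, R/2^n]\bigr\}, \qquad n \ge 0.
\]
Well-order each $\mathcal{F}^n$ arbitrarily (transfinite recursion, since $\mathcal{F}^n$ may be uncountable), and process the scales in order $n = 0, 1, 2, \ldots$; within $\mathcal{F}^n$ add $B(x,r)$ to $\mathcal{S}_n$ iff $x$ is not contained in any previously selected ball of $\mathcal{S}_0 \cup \cdots \cup \mathcal{S}_n$. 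Let $\mathcal{F}' = \bigsqcup_{n \ge 0}\mathcal{S}_n$. The inequality $\chi_S \le \sum_{B \in \mathcal{F}'} \chi_B$ is immediate: for $x \in S$, the ball $B(x,r_x) \in \mathcal{F}$ is either selected (covering $x$ trivially) or was blocked because some earlier selected ball already contained $x$.

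For the upper multiplicity bound, fix $y \in M$ and set $\mathcal{F}'_y = \{B \in \mathcal{F}' : y \in B\}$; split $\mathcal{F}'_y = \bigsqcup_n (\mathcal{F}'_y \cap \mathcal{S}_n)$. Within a fixed scale $n$, any two balls $B(x_i,r_i), B(x_j,r_j) \in \mathcal{F}'_y \cap \mathcal{S}_n$ processed with $i<j$ satisfy $d(x_i,x_j) > r_i > R/2^{n+1}$ by the greedy rule, so their centers form a strict $(R/2^{n+1})$-net. Picking any reference ball $B(x_0,r_0)$ in this subfamily, every such center $x_i$ lies in $B(y,R/2^n)$, so
\[
d(x_i,x_0) \le d(x_i,y) + d(y,x_0) \le R/2^n + R/2^n = 4 \cdot (R/2^{n+1}).
\]
Thus the centers sit inside $B(x_0,4r)$ with $r = R/2^{n+1}$ and $x_0 \in A$, and the definition of $C_0(A)$ yields $|\mathcal{F}'_y \cap \mathcal{S}_n| \le C_0(A)$.

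For the cross-scale count, pick one representative $B_n = B(x_n^\ast, r_n^\ast) \in \mathcal{F}'_y \cap \mathcal{S}_n$ from each non-empty scale. For $n_1 < n_2$, the radii satisfy $r_{n_1}^\ast > R/2^{n_1+1} \ge R/2^{n_2} \ge r_{n_2}^\ast$, and the greedy rule gives $d(x_{n_1}^\ast, x_{n_2}^\ast) > r_{n_1}^\ast > r_{n_2}^\ast$, so neither center belongs to the other ball. With $y$ as a common point, $\{B_n\}$ is a Besicovitch family of balls centered in $A$ with radii at most $R$, so WBCP bounds the number of non-empty scales by $C_1(A)$. Combining, $|\mathcal{F}'_y| \le C_1(A) \cdot C_0(A) \le C_2(A)$.

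The main obstacle I expect is purely technical: making the greedy construction rigorous when each $\mathcal{F}^n$ may be uncountable (so one needs transfinite recursion rather than naive induction), and carefully choosing the reference center $x_0$ at each scale as the center of an actual selected ball so that $x_0 \in A$ and the ball $B(x_0, 4r)$ appearing in the definition of $C_0(A)$ genuinely applies. The geometric content is captured entirely by the two observations above: within a scale, selected balls through $y$ give a separated net controlled by $C_0(A)$, and across scales a single representative per scale already forms a Besicovitch family controlled by $C_1(A)$.
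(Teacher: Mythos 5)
Your proposal is correct and follows essentially the same route as the paper: a dyadic decomposition by radius scale, a greedy selection of balls whose centers are not yet covered, a within-scale multiplicity bound via strict $r$-nets in $B(x,4r)$ (giving $C_0(A)$), and a cross-scale bound by exhibiting one representative per scale as a Besicovitch family (giving $C_1(A)$ via WBCP). Your direct multiplicity count is in fact a slightly cleaner packaging of the paper's argument, which reaches the same $C_1\cdot C_0$-type bound after first organizing each scale into at most $C_0(A)+1$ disjoint subfamilies.
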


\begin{proof}[\textbf{\textit{Proof of Proposition 2}}]
	
	\!\!\!\!\!\!\!\!\!\!\it \textbf{\ \ \ \ \ Claim:} For any subcollection\[{\cal F}^i=\{ B(x,r)\in{\cal F} :x \in S,\ ({\dfrac{1}{2}})^{i}R\leqslant r \leqslant({\dfrac{1}{2}})^{i-1}R\}\]
	while $R$ denotes $\dfrac{1}{4}\mathop {\inf }\limits_{x \in A} in{j_x}M$, we can find a $t_i\leqslant C_0(A)+1$ and $t_i$ subcollections $\{{{\cal F}_j}^i\} _{j = 1}^{t_i}$ of disjoint balls such that \[S_i \subset \bigcup\limits_{j = 1}^{t_i} {\bigcup\limits_{B \in {{\cal F}_j}^i} B }\] where $S_i$ denotes the centers of the balls in ${\cal F}^i$.
	\\ 
	\!\!\!\!\!\!\!\!proof of claim: \rm Choose a largest family of disjoint balls ${{\cal F}_1}^i=\{ B(x_m^1,r_m^1)\} _{m = 1}^{{l_1}}$ in $\mathcal{F}^i$ $(l_1\leqslant\infty)$.
	Inductively, \[\mathcal{T}^k=\{ B(x,r)\in\mathcal{F}^i :x\in S_i\backslash\bigcup\limits_{j=1}^{k-1}\bigcup\limits_{B \in {\mathcal{F}_j}^i}^{} B\}\]
	and choose a largest family of disjoint balls ${\mathcal{F}_k}^i=\{ B(x_m^k,r_m^k)\} _{m = 1}^{{l_k}}$ in $\mathcal{T}^k$ $(l_k\leqslant\infty)$.
	
	If $S_i\backslash\bigcup\limits_{t=1}^{k-1}\bigcup\limits_{B \in {\mathcal{T}^t}}^{} B \ne \emptyset$, there must be a ball $B(x,r)\in \mathcal{F}^i$ such that we can find a $B(x_t,r_t)\in {\mathcal{T}^t}$ for each $t\leqslant k-1$ and $B(x_t,r_t)\cap B(x,r) \ne \emptyset$. 
	
	Thus we have $\rho(x_t,x)\leqslant\dfrac{R}{{2^{i - 2}}}$ and $\rho(x_t,x_j) \geqslant \dfrac{R}{{2^{n }}}$ for each $t,j\leqslant k-1$ and then $\{ {x_1},{x_2}, \cdots ,{x_{k - 1}}\}$ is a $\dfrac{R}{{2^{i }}}$-net in $B(x,\dfrac{R}{{2^{i-2}}})$ which indicates that $k-1\leqslant C_0$.\\
	
	This completes the proof of Claim. \\
	
	Set $\mathcal{D}^1=\mathcal{F}^1$, $E_1$ denotes the centers of balls in $\mathcal{D}^1$ and $S_i$ denotes the centers of balls in $\mathcal{F}^i$. Then we can find subcollections $\{{{\cal F}_j}^1\} _{j = 1}^{t_1}$ of disjoint balls such that \[S_1 \subset \bigcup\limits_{j = 1}^{t_1} {\bigcup\limits_{B \in {{\cal F}_j}^1} B }.\]\\
	
	Inductively, \[\mathcal{D}^k=\{B(x,r)\in\mathcal{F}^k: x\in{S_k}\backslash \bigcup\limits_{i = 1}^{k - 1} {\bigcup\limits_{j = 1}^{{t_i}} {\bigcup\limits_{B \in F_j^i}^{} B } } \}\]
	and $E_k$ denotes the centers of the balls in $\mathcal{D}^k$. Then we can find subcollections $\{{{\cal F}_j}^k\} _{j = 1}^{t_k}$ of $\mathcal{D}^k$ as in the proof of claim such that \[E_k \subset \bigcup\limits_{j = 1}^{t_k} {\bigcup\limits_{B \in {{\cal F}_j}^k} B}\]
	while each subcollection consists of disjoint balls.\\
	
	Set ${\cal F}'=\bigcup\limits_{i = 1}^\infty  {\bigcup\limits_{j = 1}^{{m_i}} {\mathcal{F}_j^i} }$, then $S$ is covered by the balls in ${\cal F}'$ and we just need to prove that \[\sum\limits_{B \in \mathcal{F}'} {{\chi _B}}  \leqslant {C_2}(A).\]
	
	From the way of choosing $\bigcup\limits_{j = 1}^{{m_i}} {\mathcal{F}_j^i}$ and $\bigcup\limits_{j = 1}^{{m_k}} {\mathcal{F}_j^k}$ for $i<k$, we have \[\rho ({x_i},{x_k}) > {r_i} > {r_k}\ \ for\ \forall \ \ B(x_i,r_i)\in \bigcup\limits_{j = 1}^{{m_i}} {\mathcal{F}_j^i} , B(x_k,r_k)\in \bigcup\limits_{j = 1}^{{m_k}} {\mathcal{F}_j^k}\ .\]
	
	Thus for any $y\in X$, $\mathcal{F}_{y}$ denots all balls from $\mathcal{F}$ which contains $y$ and there are at most $C_{1}$ indexes  $\{ {y_1},{y_{2,}} \cdots ,{y_n}\}$  such that \[{{\cal F}^{{y_i}}} \cap {{\cal F}_y} \ne \emptyset .\]
	And for each $i$, $y$ belongs to at most $C_0(A)+1$ balls in ${{\cal F}^{{y_i}}}$. 
	
\end{proof}

\begin{theorem}
    \rm	$(M,\rho) $ is a complete Riemannian manifold and $A$ is a bounded subset of $M$. There exist ${C_1}(A)\in [1, \infty)$ and $R(A)\in (0,+\infty ]$ such that $Card\;\cal B \leqslant $  ${C_1}(A)$ for every \\\\
    \!\!\!\!\!\!\!\!Besicovitch family $\cal B$ $=\{B({x_i},{r_i}):{x_i} \in A, {r_i} \leqslant \dfrac{1}{4}\mathop {\inf }\limits_{x \in A} in{j_x}M\}$ in $(M,\rho)$.
\end{theorem}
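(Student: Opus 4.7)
The plan is to reduce the statement to a uniform Euclidean packing estimate by transporting each Besicovitch family into the tangent space at its common point via the exponential map. Boundedness of $A$ and completeness of $M$ give, by Hopf--Rinow, that $\overline{A}$ is compact; the injectivity radius is continuous and strictly positive on $M$, so $R(A):=\tfrac14\inf_{x\in A}\mathrm{inj}_xM>0$, and the sectional curvature is bounded by some $K_0$ on a compact neighbourhood of $\overline{A}$. Rauch's comparison theorem then produces bi-Lipschitz constants $c_1\leqslant c_2$ (depending only on $K_0$ and $R$) valid at every $y\in\overline{A}$: for all $v,w\in B(0,2R)\subset T_yM$,
\[c_1|v-w|\leqslant\rho(\exp_y v,\exp_y w)\leqslant c_2|v-w|.\]

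Fix a Besicovitch family $\mathcal{B}=\{B(x_i,r_i)\}_{i\in I}$ with $r_i\leqslant R$ and a common point $y\in\bigcap_iB(x_i,r_i)$. Since $\exp_y$ preserves radial distances, the vectors $v_i:=\exp_y^{-1}(x_i)$ satisfy $|v_i|=\rho(y,x_i)\leqslant r_i$, and the separation $\rho(x_i,x_j)>\max\{r_i,r_j\}$ translates into $|v_i-v_j|>c_2^{-1}\max\{r_i,r_j\}$ for $i\neq j$. It therefore suffices to establish the following ``distorted'' Euclidean packing lemma: for fixed $n$ and $\lambda\geqslant 1$, any collection of pairs $(v_i,r_i)\in\R^n\times(0,\infty)$ satisfying $|v_i|\leqslant r_i$ and $|v_i-v_j|>\lambda^{-1}\max\{r_i,r_j\}$ has cardinality at most some $N_n(\lambda)$; the theorem then follows with $C_1(A):=N_n(c_2)$.

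The packing lemma will be a straightforward modification of the classical Euclidean Besicovitch-number estimate. One orders the indices so that $r_1\leqslant r_2\leqslant\cdots$; for any pair $i<j$ with $|v_i|$ comparable to $r_j$, the law of cosines applied to $|v_i-v_j|^2>\lambda^{-2}r_j^2$ forces an angular separation at the origin bounded below by some $\theta_0(\lambda)>0$, and a sphere-covering estimate then bounds the number of such $v_i$ by a dimensional constant. The remaining indices (those with $|v_i|$ very small compared to $r_j$) are grouped dyadically by the size of $r_i$; within each dyadic band the same angular argument applies, while the number of bands that can carry an index is itself bounded by a dimensional constant via the disjointness of the Euclidean balls $B(v_i,r_i/(2\lambda))$ inside a fixed region.

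The main obstacle is exactly this cross-scale book-keeping: the classical homogeneous angular argument in $\R^n$ breaks down when two radii differ wildly, so one must combine an angular estimate inside each dyadic scale with a finite-scale argument across scales. An alternative is a ``small-ball absorption'' step, in the spirit of Claim~2 in Proposition~1, that absorbs a cluster of very small balls into a single parent ball. The uniformity of $c_1,c_2$ over $y\in\overline{A}$ from the compactness step ensures that the resulting constant $C_1(A)$ depends only on $A$ and not on the particular family or common point $y$.
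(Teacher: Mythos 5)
There is a genuine gap, and it sits exactly where you flag ``the main obstacle'': the distorted Euclidean packing lemma on which your whole reduction rests is false for every $\lambda>1$. Take $n=1$, set $\epsilon=\lambda^{-1}<1$, pick $q$ with $0<q<1-\epsilon$, and put $v_i=q^{i-1}e_1$, $r_i=q^{i-1}$ for $i=1,2,\dots$. Then $|v_i|\leqslant r_i$, and for $i<j$ one has $|v_i-v_j|=q^{i-1}(1-q^{j-i})\geqslant q^{i-1}(1-q)>\epsilon\, q^{i-1}=\lambda^{-1}\max\{r_i,r_j\}$, yet the family is infinite; so no $N_n(\lambda)$ exists. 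The dyadic/volume repair you sketch cannot rescue this: the balls $B(v_i,r_i/(2\lambda))$ in this configuration really are pairwise disjoint and contained in a fixed ball, but their volumes form a convergent geometric series, so disjointness bounds nothing, and infinitely many dyadic bands each carry one index. The point is that the Besicovitch separation is sharp: it is the \emph{exact} inequality $|v_i-v_j|>\max\{r_i,r_j\}\geqslant\max\{|v_i|,|v_j|\}$ that makes $v_iv_j$ the strictly longest side of the triangle $0,v_i,v_j$ and hence forces the angle at the origin above $\pi/3$; multiplying the right-hand side by any constant $<1$ readmits nested configurations along a single ray and destroys finiteness. Since your bi-Lipschitz constant satisfies $c_2\geqslant1$ with equality only in the flat case, the lemma you need is precisely the false one.

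The fix is to transport angles rather than distances. The Gauss lemma gives $|v_i|=\rho(y,x_i)\leqslant r_i$ exactly, and the Euclidean angle $\angle(v_i,0,v_j)$ in $T_yM$ \emph{equals} the Riemannian angle $\theta_y(x_i,x_j)$; a comparison theorem (Rauch/Toponogov, made uniform over a compact neighbourhood of $\overline{A}$ by your compactness step, which is fine) converts the exact separation $\rho(x_i,x_j)>\max\{r_i,r_j\}\geqslant\max\{\rho(y,x_i),\rho(y,x_j)\}$ into a uniform lower bound $\theta_y(x_i,x_j)\geqslant\theta_0(A)>0$ independent of the radii. The unit vectors $v_i/|v_i|$ are then $\theta_0$-separated on the sphere of $T_yM$, which bounds their number by a constant depending only on $\dim M$ and $\theta_0(A)$. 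This is essentially the paper's route: it derives $\cos\theta_y(x_i,x_j)<1-\tfrac{1}{2M^2(A)}$ from a comparison form of the law of cosines and then counts a separated net on the fixed sphere $\partial B(y,\tfrac{r_A}{16})$. Your set-up (compact exhaustion, uniform constants, exponential coordinates) is exactly what that argument needs; only the intermediate packing lemma must be replaced by the angular one.
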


\begin{proof}[\textbf{\textit{Proof}}]
	Suppose $y\in\bigcap\limits_{i = 1}^k {B({x_i}} ,{r_i})$ and ${x_i} \notin B({x_j} ,{r_j}) $ for any distinct $i,j\in \{1,2,\cdots,k\}$.
	
	By the Bishop comparison theorem, there exists a $M(A)$ such that\[{\rho ^2}({x_i},{x_j}) \leqslant {{M}^2(A)} \cdot [r_i^2 + r_j^2 - 2{r_i}{r_j}cos{\theta _y}({x_i},{x_j})].\]
	If 
	${\rho}({x_i},{x_j})>{r_j} \geqslant {r_i},$ then $\dfrac{r_i^2 + r_j^2 -{\rho }^2({x_i},{x_j})}{2{r_i}{r_j}}<\dfrac{1}{2}$  and  \[r_i^2 + r_j^2-{r_i}{r_j}<{{M}^2(A)} \cdot [r_i^2 + r_j^2 - 2{r_i}{r_j}cos{\theta _y}({x_i},{x_j})].\]
	
	Thus \[cos{\theta _y}({x_i},{x_j})<1-\dfrac{1}{2{M}^2(A)}.\]
	
	\!\!\!\!\!\!\!\!\!\!\it \textbf{\ \ \ \ \ Claim:} If ${x_i}'$ denotes the intersection point of $\partial B(y,r)$ with $r=\dfrac{r_A}{16}$ and the minimal geodesic between $x_i$ and $y$, we can find a constant $\varepsilon(A)$ such that ${cos\theta _y}({x_i},{x_j})=cos\theta _y({x_i'},{x_j'})\geqslant 1-\dfrac{1}{2{M}^2(A)}$ for any $\{i,j\}$ satisfying ${\rho }({x_i'},{x_j'})\leqslant \varepsilon(A)r$.\\\\ 
	\!\!\!\!\!\!\!\!proof of claim: \rm Set \[{A^{\frac{{{{{r}}_A}}}{2}}} = \{ x:\exists\  {x_A} \in A\ \ such\ that\ \ \rho (x,{x_A}) \leqslant \frac{{{{{r}}_A}}}{2}\}. \]
	Since $\partial B(y,r)\in {A^{\frac{{{{{r}}_A}}}{2}}}$, there exists a largest number $\varepsilon(A,y,s)$ for each $s \in \partial B(y,r)$ such that \[{cos\theta _y}(s,t)\geqslant 1-\dfrac{1}{2{M}^2(A)}\] for any point $t \in B(s,\varepsilon(A,y,s)r)$.
	
	Since $\varepsilon(A,y,s)$ is continues with rsepect to $s\in \partial B(y,r)$ and $\partial B(y,r)$ is compact, we can find the number\[\varepsilon (A,y) = min\{ \varepsilon (A,y,s):s \in \partial B(y,r)\}\]such that for any $s \in \partial B(y,r)$, ${\theta _y}(s,t)\geqslant 1-\dfrac{1}{2{M}^2(A)}$ if $t \in B(s,\varepsilon(A,y)r).$ 
	
	Since $\varepsilon(A,y)$ is continues with repect to $y\in {A^{\frac{{{{{r}}_A}}}{2}}}$ and $\overline {A^{\frac{{{{{r}}_A}}}{2}}}$ is compact, then we can find a $\varepsilon(A)$ such that for any $y\in {A^{\frac{{{{{r}}_A}}}{2}}}$, \[{cos\theta _y}(s,t)\geqslant 1-\dfrac{1}{2{M}^2(A)}\] if $\{s,t\} \subset \partial B(y,r)$ and $\rho(s,t)\leqslant\varepsilon(A)r.$\\
	
	This completes the proof of Claim. \\
	 
	Thus $\{x_i'\} _{i = 1}^k$ is a strict $\varepsilon(A)r$-net in $\partial B(y,r)$ and  then $k\leqslant C_1(A)$ while $C_1(A)$ denotes the largest cardinality of strict $\dfrac{\varepsilon(A)r_A}{16}$-net in $\partial B(a,\dfrac{r_A}{16})$ for any $a\in A$.\\
\end{proof}

\begin{definition}
    \rm	$(X,d)$	is a metric space, we say $d$ is $finite$ $dimensional$ on a subset $Y \subset X$ if there exist $C(Y) \in [1, \infty)$ and $R(Y) \in (0,+\infty ]$ such that $Card\;\cal B \leqslant $  $C(Y)$ for every Besicovitch family $\cal B$ $=\{ B({x},{r}):{x} \in Y, {r} \leqslant R(Y)\}$ in $(X,d)$.\\
\end{definition}

\begin{definition}
	\rm $(X,d)$	is a metric space, we say $d$ is $\sigma$-\it{finite}\rm $\ dimensional$ on $X$ if $X$ can be written as a countable union of subsets on which $d$ is finite dimensional.\\
\end{definition}

\begin{proposition} $[23]$
    \rm Let $(X,d)$	be a complete saperable metric space. The differentiation theorem holds for every locally finite Borel regular measure over $(X,d)$ if and only if $d$ is $\sigma$-finite dimensional.\\
\end{proposition}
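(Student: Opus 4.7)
The plan is to prove the two implications by the standard route connecting covering properties to differentiation of Radon measures. For the forward direction ($\sigma$-finite dimensional $\Rightarrow$ differentiation), I would write $X = \bigcup_{n \geq 1} Y_n$ with $d$ finite dimensional on each $Y_n$, so that Besicovitch families based in $Y_n$ with radii $\leq R(Y_n)$ have cardinality $\leq C(Y_n)$, and reduce to showing differentiation at $\mu$-almost every point of each $Y_n$. Fix such $n$. The strategy is to exploit WBCP on $Y_n$ directly to prove a weak $(1,1)$ inequality for the truncated centered maximal operator $M_\mu f(x) = \sup_{0 < r \leq R(Y_n)} \mu(B(x,r))^{-1} \int_{B(x,r)} |f|\, d\mu$ restricted to $x \in Y_n$. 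Given $f \in L^1(\mu)$ and $\lambda > 0$, the super-level set $E_\lambda = \{x \in Y_n : M_\mu f(x) > \lambda\}$ is covered by balls of radius $\leq R(Y_n)$ on which the average of $|f|$ exceeds $\lambda$; a selection argument using only the WBCP bound $C(Y_n)$ extracts a subfamily whose union still covers $E_\lambda$ with overlap at most $C(Y_n)$, yielding $\mu(E_\lambda) \leq C(Y_n)\,\lambda^{-1}\,\|f\|_{L^1(\mu)}$. Differentiation at $\mu$-a.e.\ $x \in Y_n$ then follows in the usual way by approximating $f$ by continuous functions in $L^1(\mu)$, which are dense because $X$ is complete and separable and $\mu$ is locally finite, so Lusin's theorem applies.

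For the reverse direction I would argue by contrapositive: assume $d$ is not $\sigma$-finite dimensional and construct a locally finite Borel regular measure $\mu$ together with a bounded Borel function $f$ for which the differentiation theorem fails on a set of positive $\mu$-measure. Failure of the hypothesis means that no countable decomposition of $X$ has all pieces finite dimensional; by a diagonal argument one obtains, for each $k$, a Besicovitch family $\mathcal{B}_k = \{B(x_i^k, r_i^k)\}_{i=1}^{N_k}$ with $N_k \to \infty$ and radii tending to $0$, whose common intersection contains a point $y_k$, and one can moreover arrange $y_k \to y_\infty$. The Besicovitch condition forces the centers $x_i^k$ to be spread out relative to the radii, so one can assign carefully chosen weights (either as Dirac masses at the centers, or smeared into tiny disjoint bumps to enforce Borel regularity with a density in $L^1_{\mathrm{loc}}$) so that certain balls $B(y_\infty, \rho_k)$ absorb large mass coming from $\mathcal{B}_k$ while intermediate balls $B(y_\infty, \rho_k')$ catch very little. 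Choosing $f$ to equal $\pm 1$ according to which scale is being hit then forces the averages over $B(y_\infty, r)$ to oscillate as $r \to 0$, contradicting differentiation.

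The main obstacle is the reverse direction. The construction must be arranged so that the oscillation genuinely survives in the limit despite the compatibility constraints (local finiteness, Borel regularity, and the precise alternation of scales $\rho_k$ and $\rho_k'$ producing large and small averages). One must also verify that the support of $\mu$ is not accidentally contained in a $\sigma$-finite dimensional subset of $X$, since otherwise the forward direction would restore differentiation there and defeat the counterexample. The forward direction, in contrast, is essentially a routine transcription of the classical Vitali-type weak $(1,1)$ argument with the WBCP bound $C(Y_n)$ playing the role of Besicovitch's constant in Euclidean space.
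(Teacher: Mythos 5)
First, note that the paper itself offers no proof of this proposition: it is imported verbatim from Preiss [23], so there is nothing internal to compare your argument against; what follows is an assessment of your proposal on its own terms. Your forward direction follows the standard and essentially correct route (decompose $X=\bigcup_n Y_n$, prove a weak $(1,1)$ bound for the truncated centered maximal operator on each $Y_n$, then use density of continuous functions), but the one step you state precisely is wrong as stated: extracting from the cover of $E_\lambda$ a subfamily that still covers $E_\lambda$ \emph{with overlap at most} $C(Y_n)$ is exactly the assertion that WBCP implies BCP, which fails in general metric spaces (the paper itself records this in Section 4.1, citing [9], Example 3.4; the obstruction is that the greedy selection only controls one of the two non-containment conditions when the supremum of radii is not attained). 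The correct derivation of the weak $(1,1)$ inequality from WBCP does not produce a bounded-overlap cover: one reduces to a finite subfamily whose union already captures most of the measure, orders it by (genuinely attained) decreasing radii, and selects greedily so that the selected balls through any fixed point form an honest Besicovitch family of cardinality at most $C(Y_n)$. This is fixable, but as written the step would fail.

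The reverse direction is the substantive half of Preiss's theorem, and your sketch does not constitute a proof of it. Two concrete gaps: (a) the negation of $\sigma$-finite dimensionality is the nonexistence of \emph{any} countable decomposition into finite-dimensional pieces; it does not directly yield a sequence of Besicovitch families $\mathcal{B}_k$ with $N_k\to\infty$, radii tending to $0$, and common points $y_k$ converging to a single $y_\infty$ --- isolating a closed set on which the metric is ``infinite dimensional at every point and every scale'' is itself a nontrivial exhaustion/category argument, and it is precisely here that completeness and separability enter. (b) The assignment of ``carefully chosen weights'' so that the resulting measure is locally finite and Borel regular, the averages over $B(y_\infty,r)$ genuinely oscillate as $r\to 0$, and the failure occurs on a set of \emph{positive} $\mu$-measure (not just at one point) is the main technical content of [23]; the concern you yourself raise --- that the support of $\mu$ might accidentally be $\sigma$-finite dimensional, in which case the forward direction would restore differentiation --- is real and is exactly why the construction must be carried out on the carefully prepared closed set from step (a). As it stands, the proposal identifies the right skeleton but neither half is complete: the forward half relies on a false covering claim, and the reverse half defers the entire difficulty to unspecified constructions.
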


From Theorem 2 , $\rho$ is $\sigma$-finite dimensional in a complete Riemannian manifold $(M,\rho)$.
 Thus we have 
\begin{corollary}
	\rm Suppose $(M,\rho) $ is a complete Riemannian manifold, 
	\[\mathop {\lim }\limits_{{\text{r}} \to 0} \frac{1}{{\mu (B(x,r))}}\int\limits_{B(x,r)} {f(y)d\mu (y) = f(x)} \quad \quad     \mu  - a.e.\]
	for $\forall$ Radon measure $\mu$.\\
\end{corollary}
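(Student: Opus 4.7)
The plan is to deduce Corollary 1 by combining Theorem 2 with Proposition 3 via Definition 6. Proposition 3 says the differentiation theorem holds for every Radon (equivalently, locally finite Borel regular) measure as soon as $\rho$ is $\sigma$-finite dimensional, and Theorem 2 already provides finite dimensionality on any bounded subset. So the only real task is to exhibit a countable exhaustion of $M$ by bounded subsets on which Theorem 2 applies with a uniformly positive radius bound.

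First, I would use the fact that a complete connected Riemannian manifold is separable and locally compact, so by the Hopf--Rinow theorem closed metric balls are compact. Fixing a basepoint $p \in M$, set $K_n := B(p,n)$; then $M = \bigcup_{n\geq 1} K_n$ and each $K_n$ is bounded. The key point is then to check that $R(K_n) := \tfrac{1}{4}\inf_{x\in K_n} \mathrm{inj}_x M > 0$. Since $\mathrm{inj}_x M$ is a continuous (in fact lower semicontinuous) strictly positive function on $M$ and $K_n$ is compact, its infimum on $K_n$ is attained and positive. This is the step where the completeness assumption is actually used, and it is the main (very mild) technical point of the argument.

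Next, applying Theorem 2 to $A = K_n$ gives a constant $C_1(K_n)\in[1,\infty)$ bounding the cardinality of every Besicovitch family whose centers lie in $K_n$ and whose radii are at most $R(K_n)$. By Definition 6, this means $\rho$ is finite dimensional on each $K_n$. By Definition 7, since $M = \bigcup_n K_n$ is a countable union of such subsets, $\rho$ is $\sigma$-finite dimensional on $M$.

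Finally, $(M,\rho)$ is a complete separable metric space, so Proposition 3 applies and yields that for every locally finite Borel regular measure $\mu$ on $M$, and in particular for every Radon measure $\mu$, the differentiation identity
\[
\lim_{r\to 0}\frac{1}{\mu(B(x,r))}\int_{B(x,r)} f(y)\,d\mu(y) = f(x) \qquad \mu\text{-a.e.}
\]
holds for every $f \in L^1_{\mathrm{loc}}(\mu)$. This completes the plan; no additional estimates beyond the compactness argument for $R(K_n)>0$ are required.
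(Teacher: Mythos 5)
Your proposal is correct and follows exactly the route the paper intends: Theorem 2 gives finite dimensionality on bounded sets, an exhaustion by compact balls (with $\inf_{x\in K_n} inj_xM>0$ by Hopf--Rinow and the positivity and lower semicontinuity of the injectivity radius) gives $\sigma$-finite dimensionality, and Proposition 3 concludes. The paper states this deduction in a single sentence without justifying the exhaustion step, so your writeup actually supplies the details the paper omits.
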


\subsection{Besicovitch-Morse type theorems in Riemannian manifold}

\begin{definition}($\tau $-satellite configuration)\rm {[6]}
	$(X,d)$	\rm{is a metric space. Fix} $\tau >\text{1 }$ and ${I_n} = \{ 1,2, \cdots ,n\}$. Let $\{{a_i}:i\in{I_n}\}$
	and  $\{{S_i}:i\in{I_n}\}$ be, respectively, an ordered set of points and an ordered set of bounded subsets in $X$. We say that $\{{S_i}:i\in{I_n}\}$ is in $\tau$ -$satellite$ $configuration$ with respect to $\{{a_i}:i\in{I_n}\}$ if the following conditions hold for each $i\in{I_n}$ and some index ${i_0}\in{I_n}$ called the central index:
	
	$1$): ${a_i}\in{S_i}$,
	
	$2$): ${S_{{i_o}}} \cap {S_i} \ne \emptyset$,
	
	$3$): $diam{S_{{i_o}}} < \tau  \cdot diam{S_i}$,
	
	$4$): if $i < j \leqslant n$, then ${a_j} \notin {S_i}$ and $diam{S_j} < \tau  \cdot diam{S_i}$.\\
\end{definition}

\begin{proposition}
	\rm$(M,\rho)$ is a complete Riemannian manifold, $A$ is an arbitrary bouned subset of $M$ and $\;{r_A}$ denotes $\; \frac{1}{4}\mathop {\inf }\limits_{x \in A} in{j_x}M$.
	
	Suppose we are given a set ${I_n} = \{ 1,2, \cdots ,n\}$ and an ordered set $\{{S_i}\subset M:i\in{I_n}\}$ of bounded sets in $\tau$-satellite configuration with respect to an ordered set $\{{a_i}:i\in{I_n}\}\subset A$.\\
	
	Also suppose that a ball $B({a_i},{r_i})$ with ${r_i} < \frac{{{r_A}}}{{4\lambda  + \frac{1}{{6\lambda }}}}$ is contained in ${S_i}$ for each $i\in{I_n}$. 
	
	For the central index  $i_0$, the minimal geodesic between ${a_{{i_o}}}$ and ${a_i}$ and  $\partial B({a_{{i_o}}},\frac{{{r_A}}}{16})$ intersect at 
	${a_{\text{i}}}^\prime$. \\
	
	Assume that there are constants $C>0$ and $D \geqslant$ 1 such that for any pair of distinct indexes $i$ , $j$ satisfying
	
	$1$): $D{r_{{i_o}}}<\rho({a_{{i_o}}},{{a_i}})<\rho({a_{{i_o}}},{{a_j}})$,
	
	$2$): $\rho({a_{\text{i}}}^\prime,{a_{\text{j}}}^\prime)\leqslant C$,\\
	
	\!\!\!\!\!\!\!\!we all have $\{{a_i}\in{S_j}\}$ if ${r_{{k}}} < \frac{{{r_A}}}{{D + \frac{1}{{6\lambda }}}}$ for each $k\in\{1,2,\cdots,n\}$.\\
	
	If we set $\lambda  = \mathop {\max }\limits_{i \in {I_n}} \frac{{diam{S_i}}}{{2{r_i}}}$, then there exists a $N(A,\lambda,C,D)$ related to $A,\lambda,C$ and $D$ such that $Card({I_n})\leqslant N(A,\lambda,C,D)$.
	
\end{proposition}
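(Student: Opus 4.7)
The plan is to split $I_n$ into a near part $I_{\mathrm{near}} = \{i \in I_n : \rho(a_{i_0}, a_i) \leqslant D r_{i_0}\}$ and a far part $I_{\mathrm{far}} = I_n \setminus I_{\mathrm{near}}$, then bound each piece separately. A preliminary observation is that conditions 3 and 4 of the $\tau$-satellite configuration, combined with $B(a_i, r_i) \subset S_i$ and $\mathrm{diam}\, S_i \leqslant 2\lambda r_i$, force all $\mathrm{diam}\, S_i$ and hence all $r_i$ to lie in an interval whose endpoints depend only on $\tau$ and on $r_{i_0}$; so the radii are mutually comparable to $r_{i_0}$.

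For $I_{\mathrm{near}}$, satellite condition 4 applied in either order gives $a_j \notin S_i \supset B(a_i, r_i)$ for $i < j$, hence $\rho(a_i, a_j) > \min(r_i, r_j)$ for every distinct pair. The set $\{a_i : i \in I_{\mathrm{near}}\}$ is therefore a strict packing of points in $B(a_{i_0}, D r_{i_0})$ with separations comparable to $r_{i_0}$. A Bishop-comparison volume argument, patterned after Claim 1 in the proof of Proposition 1, yields $|I_{\mathrm{near}}| \leqslant N_1(A, \lambda, \tau, D)$.

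For $I_{\mathrm{far}}$, I would project each $a_i$ onto the geodesic sphere $\partial B(a_{i_0}, r_A/16)$ along the minimizing radial geodesic to obtain $a_i'$ (well defined because $r_A/16 < \mathrm{inj}_{a_{i_0}} M$), and then choose a maximal $\tfrac{C}{2}$-separated subset $\{b_1, \ldots, b_q\}$ on that sphere. Bishop comparison on the sphere bounds $q$ by a constant $N_2(A, C)$. Next I would partition $I_{\mathrm{far}}$ into clusters $K_l = \{i \in I_{\mathrm{far}} : \rho(a_i', b_l) \leqslant C/2\}$ and, within each cluster, choose $i^*$ minimizing $\rho(a_{i_0}, a_i)$. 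For every other $i \in K_l$ the pair $(i^*, i)$ satisfies conditions (1) and (2) of the assumption, so the hypothesis forces $a_{i^*} \in S_i$, giving $\rho(a_{i^*}, a_i) \leqslant \mathrm{diam}\, S_i \leqslant 2\lambda r_i$. Combined with the near-type separation $\rho(a_i, a_j) > \min(r_i, r_j)$, the cluster centers form a packing of $\Omega(r_{i_0})$-separated points inside a ball of radius $O(r_{i_0})$ around $a_{i^*}$; a second Bishop packing yields $|K_l| \leqslant N_3(A, \lambda, \tau)$, whence $|I_{\mathrm{far}}| \leqslant q \cdot N_3 \leqslant N_2 \cdot N_3$.

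Adding the two bounds gives $n \leqslant N_1 + N_2 \cdot N_3 =: N(A, \lambda, C, D)$. The hard part will be the cluster analysis: one has to combine the asymmetric hypothesis $a_i \in S_j$ with satellite conditions 3 and 4 carefully enough to obtain simultaneously (i) the containment of every cluster center in a ball of radius $O(r_{i_0})$ around $a_{i^*}$ and (ii) a uniform pairwise $\Omega(r_{i_0})$ separation, both with constants depending only on $\tau$ and $\lambda$. Once this comparability bookkeeping is in place, the remaining volume estimates on spheres and balls are routine applications of the Bishop comparison theorem, as in the proofs earlier in Section 3.
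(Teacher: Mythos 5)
Your proposal follows essentially the same route as the paper: split $I_n$ at distance $\sim D r_{i_0}$ from the central point, bound the near part by a Bishop packing using the satellite separation $\rho(a_i,a_j)>r_i\gtrsim r_{i_0}/(\lambda\tau)$, and handle the far part by combining a $C$-net on the sphere $\partial B(a_{i_0},r_A/16)$ with a bounded-multiplicity estimate for $\{j: a_i\in S_j\}$ (your cluster bound is precisely the paper's Claim, which the paper organizes as a greedy selection-and-removal rather than a maximal net plus clusters). One correction to your ``comparability'' bookkeeping: conditions 3 and 4 give only a \emph{lower} bound on $\mathrm{diam}\,S_i$ in terms of $\mathrm{diam}\,S_{i_0}$ when $i<i_0$, so the cluster containment radius is $O(r_{i^*})$ rather than $O(r_{i_0})$; the argument still closes because $a_{i^*}\in S_i$ forces $i>i^*$ (condition 4), hence $\mathrm{diam}\,S_i<\tau\,\mathrm{diam}\,S_{i^*}\leqslant 2\lambda\tau r_{i^*}$ while the pairwise separation inside the cluster is also $\Omega(r_{i^*}/(\lambda\tau))$, so the packing ratio depends only on $\tau$ and $\lambda$.
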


\begin{proof}[\textbf{\textit{Proof}}]
	From the definition of $\tau $-satellite configuration, \[\rho ({a_i},{a_j}) > {r_i} \geqslant \dfrac{{diam{S_i}}}{{2\lambda }} \geqslant \dfrac{{diamS}}{{2\lambda \tau }} \geqslant \dfrac{r}{{2\lambda }}\ \ for\ \forall \ i < j.\] 
	Thus $Card\{ i \in {I_n},\rho (a,{a_i}) \leqslant {D}r\}$ can not exceed $N(A,D,\lambda)$ which denotes the largest cardinality of $\dfrac{r}{2\lambda }$-net in $B(a,Dr)$ for each $a\in A$ and $r\leqslant min\{ \dfrac{{{r_A}}}{{4\lambda+ \frac{1}{{6\lambda }}}},\dfrac{{{r_A}}}{{D+ \frac{1}{{6\lambda }}}}\}$.\\
	
	\!\!\!\!\!\!\!\!\!\!\it \textbf{Claim:} There exists a $N'(A,D,\lambda)$ such that $a_i$ belongs to at most $N'(A,D,\lambda)$ elements of ${\{ B({x_j},{r_j})\} _{j \in {I_n}}}$ for each ${i\in {I_n}}$.
	\\\\
	\!\!\!\!\!\!\!\!proof of claim: \rm If $a_i\in S_j$, then \[\rho ({a_i},{a_j}) < diam{S_j} < \tau  \cdot diam{S_i} \leqslant 2diam{S_i} \leqslant 4\lambda {r_i}.\]
	Thus $a_j\in B({x_i},4{r_i})$ and $\rho ({a_i},{a_j})\geqslant r_i\geqslant\dfrac{r_i}{2\lambda}$ since $\lambda\geqslant\dfrac{1}{2}.$
	
	If $a_i\in S_k$ and $k>j$, then\[\rho ({a_k},{a_j}) \geqslant r_j\geqslant \dfrac{diamS_j}{2\lambda} \geqslant \dfrac{\rho ({a_i},{a_j})}{2\lambda} \geqslant \dfrac{r_i}{4\lambda}.\]
	
	Thus $Card\{ j \in {I_n}: a_i\in S_j\}$ can not exceed $N'(A,D,\lambda)$ which denotes the largest number of $\dfrac{r}{2\lambda }$-net in $B(a,4\lambda r)$ for each $a\in A$ and $r\leqslant min\{ \dfrac{{{r_A}}}{{4\lambda+ \frac{1}{{6\lambda }}}},\dfrac{{{r_A}}}{{D+ \frac{1}{{6\lambda }}}}\}$.
	
	This completes the proof of Claim. \\
	
	Set \[I' = \{ i \in {I_n}:\rho (a,{a_j}) > Dr\},\]\[\mathcal {F}=\{B(a_i,r_i):i\in I'\}.\] 
	
	Select an arbitrary ball $B_1=B(a_{i_1},r_{i_1})$ such that $a_{i_1}\in I'$ and set \[\mathcal{F}_1 = \mathcal {F} - \bigcup\limits_{{a_{{i_1}}} \in B({a_i},{r_i})}^{} {B({a_i},{r_i})} .\] 
	
	Inductively, select an arbitrary ball $B_k=B(a_{i_k},r_{i_k})$ such that $B(a_{i_k},r_{i_k})\in \mathcal{F}_{k-1}$ and set \[\mathcal{F}_k = \mathcal {F}_{k-1} - \bigcup\limits_{{a_{{i_{k-1}}}} \in B({a_i},{r_i})}^{} {B({a_i},{r_i})} \]while  $\bigcup\limits_{{a_{{i_k}}} \in B({a_i},{r_i})}^{} {B({a_i},{r_i})}$ includes at most $N'(A,D,\lambda)$ elements for each $k$ by the claim.
	
	Suppose we have selected $\{ {B_1},{B_2}, \cdots ,{B_k}\}$, then \[{a_{{i_s}}} \notin {B_t}\ and\  {a_{{i_t}}} \notin {B_s}\ for\ \forall\  1\leqslant s \ne t\leqslant k.\] 
	By the assumed condition, \[\rho ({a_{{i_s}}'},{a_{{i_t}}'})>C\  for\ \forall\  1\leqslant s \ne t\leqslant k\] and then k can not exceed  $N(A,C)$ which denotes the largest cardinality of strict $C$-net in $\partial B(a,\dfrac{r_A}{16}).$\\
	
	At last, \[N(A,\lambda ,C,D) = N(A,D,\lambda) + N'(A,D,\lambda ) \cdot N(A,C).\] 
	
\end{proof}

Next we introduce a constant $ \alpha (A)$ related to a bounded set $A$ in a complete Riemannian manifold $(M,\rho)$. By the proof of Bishop Comparison theorem, there are two constants $M_1(A)$ and $M_2(A)$ such that\[{M_1}(A) \leqslant \left| {Jacbian\ of\ {{\exp }_a}} \right| \leqslant {M_2}(A)\]
for each $a\in A$ and then we set $ \alpha (A)=\dfrac{{M_2}(A)}{{M_1}(A)}.$\\

\begin{proposition}
	$(M,\rho)$ \rm is a complete Riemannian manifold and $A$ is a bounded subset of $M$. Fix $\text{1 }<\tau  \leqslant 2$ and $\lambda  \geqslant 1$, then there exists $C(A,\lambda) \in \mathbb{N}$ such that
	 if a finite ordered collection of convex sets $\{{S_i}:i = 1,2, \cdots ,n\}$ whose elements are subsets of $M$ and satisfy $diamS_i<min\{\dfrac{r_A}{8\alpha(A)},\dfrac{r_A}{8}\}$ is a $\tau $-satellite configuration with respect to an ordered set $\{{a_i}:i = 1,2, \cdots ,n\}$ while there exists ${r_i}<\dfrac{{{r_A}}}{{32{\lambda ^2} \cdot \alpha (A)+ \frac{1}{{6\lambda }}}}$ for each ${a_i}$ such that 
	 \[B({a_i},{r_i}) \subset S_i(a) \subset B({a_i},\lambda{r_i}),\]
	 then $n$ can not exceed $C(A,\lambda)$	 
\end{proposition}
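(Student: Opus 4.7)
The strategy is to reduce to Proposition 3 by finding constants $C = C(A,\lambda)$ and $D = D(A,\lambda)$ for which the key implication of Proposition 3 is automatic: whenever $D r_{i_0} < \rho(a_{i_0},a_i) < \rho(a_{i_0},a_j)$ and $\rho(a_i',a_j') \leqslant C$, the point $a_i$ lies in $S_j$. Once this implication is verified, Proposition 3 immediately yields the bound $\mathrm{Card}(I_n) \leqslant N(A,\lambda,C,D)$, which we then relabel as $C(A,\lambda)$.

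First I would collect the bookkeeping bounds. The assumption $\tau \leqslant 2$ together with the sandwich $B(a_i,r_i)\subset S_i\subset B(a_i,\lambda r_i)$ and the satellite conditions (3)--(4) imply that all radii $r_k$ are comparable to $r_{i_0}$ up to a factor of order $\lambda$, and that $\rho(a_k,a_{i_0}) \leqslant \mathrm{diam}\, S_k + \mathrm{diam}\, S_{i_0}$ is at most of order $\lambda r_{i_0}$. The combined bounds $r_i < r_A/(32\lambda^2\alpha(A)+1/(6\lambda))$ and $\mathrm{diam}\, S_i < r_A/8$ keep all relevant points well inside the injectivity-radius neighborhood of $a_{i_0}$, so that $\exp_{a_{i_0}}^{-1}$ is a diffeomorphism on the region we work with.

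The geometric heart of the argument is the following. Pick any $p \in S_{i_0} \cap S_j$; from $p \in S_{i_0}\subset B(a_{i_0},\lambda r_{i_0})$ we obtain $\rho(p,a_{i_0}) \leqslant \lambda r_{i_0}$, while $p \in S_j$ by construction. Convexity of $S_j$ together with $B(a_j,r_j)\subset S_j$ places the entire geodesic cone from $p$ over $B(a_j,r_j)$ inside $S_j$. I would then transfer this picture into the tangent space at $a_{i_0}$ via $\exp_{a_{i_0}}^{-1}$; by the Jacobian bounds defining $\alpha(A) = M_2(A)/M_1(A)$, Riemannian distances in this chart agree with Euclidean distances up to the factor $\alpha(A)$. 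Choose $D := 32\lambda^2\alpha(A)$; this is large enough to guarantee both that the Proposition 3 radius hypothesis $r_k < r_A/(D+1/(6\lambda))$ is met, and that, whenever $\rho(a_{i_0},a_j) > D r_{i_0}$, the ratio $r_j/\rho(a_{i_0},a_j)$ is bounded below by a positive quantity depending only on $\lambda$. Taking $C$ of order $r_A/(\lambda \alpha(A)^2)$ then forces the angular deviation between the geodesics from $a_{i_0}$ to $a_i$ and from $a_{i_0}$ to $a_j$ (as measured by $\rho(a_i',a_j')$) to be small enough that $a_i$ lies inside the Euclidean cone approximation; pulling back, $a_i \in S_j$, as required.

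The main obstacle I anticipate is the passage from geodesic convexity in $M$ to a usable Euclidean cone inside $\exp_{a_{i_0}}^{-1}(S_j)$. Although the sandwich by balls transfers cleanly (balls in $M$ correspond, up to $\alpha(A)$-distortion, to balls in the tangent space), geodesic convex hulls do not coincide with Euclidean convex hulls in the chart. Controlling this discrepancy uniformly on $A$ is precisely where the factor $\alpha(A)^2$ in $C$ and the appearance of $\alpha(A)$ in the radius bound $r_A/(32\lambda^2\alpha(A)+1/(6\lambda))$ originate; this is the delicate step to execute carefully when filling in the plan.
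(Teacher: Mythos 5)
Your plan follows essentially the same route as the paper's proof: it reduces to the preceding satellite-configuration counting proposition with the same choice $D = 32\lambda^{2}\alpha(A)$ and $C$ of order $r_A/(\lambda\,\alpha(A))$, and verifies the implication $a_i\in S_j$ by working in the chart $\exp_{a_{i_0}}^{-1}$, using the Jacobian bounds behind $\alpha(A)$ together with the convexity of $S_j$ through a point of $S_{i_0}\cap S_j$ and the ball $B(a_j,r_j)\subset S_j$. The one step you defer --- reconciling Euclidean segments in the chart with the minimal geodesics needed to invoke convexity of $S_j$ --- is precisely where the paper performs its similar-triangle computation (extending the segment through $b$ to a point $y$ and showing $\exp_a(y)\in B(a_j,r_j)$), and even there the paper glosses over that reconciliation by asserting that $b$ lies on the minimal geodesic between $x$ and $\exp_a(y)$, so your plan is no less complete than the published argument on this point.
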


\begin{proof}[\textbf{\textit{Proof}}] 

	Here we will apply the  proposition 4. $S$ and $B(a,r)$ respectively denotes the set and the ball associated with the central index  $i_0$, and the minimal geodesic between $a$ and ${a_i}$ intersects  $\partial B({a},\dfrac{{{r_A}}}{16})$  at 
	${a_{\text{i}}}^\prime$.\\
	
	Select distinct $b,c\in \{a_1,a_2,\dots,a_n\}$ satisfying\[\rho (a,c) \geqslant \rho (a,b) > 32{\lambda ^2} \cdot \alpha (A) \cdot r\ \ and\ \ \rho (b',c') \leqslant \frac{{\frac{{{r_A}}}{{16}}}}{{16\lambda  \cdot \alpha (A)}}.\]
	Suppose $c$ denotes $a_j$. If we can deduce that $b\in S_j$, then \[C(A,\lambda)=N(A,\lambda,\frac{{\frac{{{r_A}}}{{16}}}}{{16\lambda  \cdot \alpha (A)}}),32{\lambda ^2} \cdot \alpha (A)\]by Proposition 4.\\
	
	Consider the exponential map $exp_a$, draw a straight line from ${exp_a}^{-1}(a)$ to ${exp_a}^{-1}(b)$ in $T_a(M)$ and extend it to $T$ such that $\rho (a,exp_a(T)) = \rho (a,c).$ Then select a point $x\in S \cap S_i$, draw a straight line from ${exp_a}^{-1}(x)$ to ${exp_a}^{-1}(b)$ in $T_a(M)$ and extend it to $y$ such that \[d({exp_a}^{-1}(x),y)=\dfrac{{\rho (a,c)}}{{\rho (a,b)}}\cdot d({exp_a}^{-1}(x),{exp_a}^{-1}(b))\] while $d$ denotes the Euclid distance in $T_a(M)$. \\
	
	\begin{figure}[h]
		\centering
	
	\end{figure}
	
	\!\!\!\!\!\!\!\!\!\!\it \textbf{Claim:} $exp_a(y)\in B(c,r_j)\subset S_j$. 
	
	\!\!\!\!\!\!\!\!proof of claim: \rm \[\rho ({\exp _a}(y),c) \leqslant \rho ({\exp _a}(y),{\exp _a}(T)) + \rho ({\exp _a}(T),c).\]
	
	Since \[\rho ({\exp _a}(T),a) = \rho (a,c) < diamS + diam{S_i} <  {r_A},\]
    \[\rho (a,x) < diamS < {r_A}\ \  and\ \ \rho (a,{\exp _a}(y)) < \rho (a,x) + s\cdot\alpha(A) \rho (b,x) < {r_A},\] then
    \[\rho (a,x) \geqslant {M_1}(A) \cdot d(0,{\exp _a}^{ - 1}(x))\]and \[\rho ({\exp _a}(y),{\exp _a}(T)) \leqslant {M_2}(A) \cdot d(y,T) = {M_2}(A) \cdot (s - 1)d(0,{\exp _a}^{ - 1}(x)).\]
    If $s$ denotes $\dfrac{{\rho (a,c)}}{{\rho (a,b)}}$, then \[\rho ({\exp _a}(y),{\exp _a}(T)) \leqslant (s - 1)\frac{{{M_2}(A)}}{{{M_1}(A)}}\rho (a,x) = (s - 1) \cdot \alpha (A)\rho (a,x)\]
    by the Bishop Comparison theorem.\\
	 
	Next,\[
	\rho ({\exp _a}(T),c) \leqslant {M_2}(A) \cdot d(T,{\exp _a}^{ - 1}(c)) \hfill \]
	and \[\rho (b',c') \geqslant {M_1}(A) \cdot d({\exp _a}^{ - 1}(b'),{\exp _a}^{ - 1}(c')). \]
	Since \[\frac{{d(T,{{\exp }_a}^{ - 1}(c))}}{{d({{\exp }_a}^{ - 1}(b'),{{\exp }_a}^{ - 1}(c'))}} = \frac{{\rho (a,c)}}{{\frac{{{r_A}}}{{16}}}},\]then \[\rho ({\exp _a}(T),c) \leqslant \frac{{{M_2}(A)}}{{{M_1}(A)}}\frac{{\rho (a,c)}}{{\frac{{{r_A}}}{{16}}}}\rho (b',c') = \alpha (A) \cdot \rho (a,c) \cdot \frac{{\rho (b',c')}}{{\frac{{{r_A}}}{{16}}}} \leqslant \frac{{\rho (a,c)}}{{16\lambda }}.\]
	Besides,\[\rho (a,b) > 32{\lambda ^2} \cdot \alpha (A) \cdot r \geqslant 16\lambda  \cdot \alpha (A) \cdot diamS\]
	and then\[\rho ({\exp _a}(y),{\exp _a}(T)) \leqslant s\cdot \alpha (A)\rho (a,x) \leqslant s \cdot \alpha (A)\cdot diamS\leqslant\frac{{\rho (a,c)}}{{16\lambda }}.\]
	Now \[\rho ({\exp _a}(y),c) \leqslant \rho ({\exp _a}(y),{\exp _a}(T)) + \rho ({\exp _a}(T),c)\leqslant\frac{{\rho (a,c)}}{{8\lambda }}, \]and then
	
	\[\begin{gathered}
	\frac{{\rho (a,c)}}{{8\lambda }} \leqslant \frac{{\rho (a,x) + \rho (c,x)}}{{8\lambda }} \leqslant \frac{{diamS + diam{S_j}}}{{8\lambda }} \leqslant \frac{{(\tau  + 1)diam{S_j}}}{{8\lambda }} \hfill \\
	\ \ \ \ \ \ \ \ \ \ \leqslant \frac{{3diam{S_j}}}{{8\lambda }} \leqslant \frac{{diam{S_j}}}{{2\lambda }} \leqslant {r_j}. \hfill \\ 
	\end{gathered} \]
	This completes the proof of Claim. \\
	
	Since $b$ lies in the minimal geodesic between $x$ and $exp_a(y)$ and $S_j$ is convex, then $b\in S_j$ because $x\in S_j$. 
\end{proof}

According to Theorem $5.4$ of [6] and Proposition 5, we have

\begin{theorem}\rm(Besicovitch-Morse type theorem)
 $(M,\rho)$ \rm is a complete Riemannian manifold, $A$ is an arbitrary bounded subset of $M$ and $\;{r_A}$ denotes $\; \frac{1}{4}\mathop {\inf }\limits_{x \in A} in{j_x}M$. For each point $a \in A$, associate $a$ with a convex set $S(a)$ containing $a$ such that $diamS<min\{\dfrac{r_A}{8\alpha(A)},\dfrac{r_A}{8}\}$ and there exists a $r < \frac{{{r_A}}}{{4\lambda  + 1}}$  such that \[B(a,r) \subset S(a) \subset B(a,\lambda r).\]
 
 Then for some $N\in \mathbb{N}$ not exceeding $N(A,\lambda)$, there are subsets $A_1,A_2,\cdots, A_N$ of $A$ such that \[A \subseteq \bigcup\limits_{i = 1}^N {\mathop  \cup \limits_{a \in {A_i}} } S(a)\] and the elements of the collection $\{ S(a):a \in {A_i}\}$ are pairwise disjoint for each $i$.\\
\end{theorem}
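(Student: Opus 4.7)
The plan is to obtain Theorem 3 as a direct consequence of Proposition 5 together with the general Besicovitch--Morse coloring scheme recorded as Theorem 5.4 of [6]. Proposition 5 supplies a uniform bound $C(A,\lambda)$ on the cardinality of any $\tau$-satellite configuration (for any fixed $\tau\in(1,2]$) built from convex sets of the type appearing in Theorem 3. The abstract scheme then turns such a bound into a decomposition of the covering family $\{S(a):a\in A\}$ into $N\leq C(A,\lambda)$ subfamilies, each consisting of pairwise disjoint sets.

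The first step is to verify that Proposition 5 is genuinely applicable to $\{S(a):a\in A\}$. The diameter constraint $\mathrm{diam}\,S(a)<\min\{r_A/(8\alpha(A)),r_A/8\}$ is exactly the one required there. For the radius, the hypothesis $r<r_A/(4\lambda+1)$ combined with $\mathrm{diam}\,S(a)\leq 2\lambda r$ gives $r<r_A/(16\lambda\alpha(A))$; after possibly shrinking the inscribed ball slightly one reaches the sharper bound $r<r_A/(32\lambda^{2}\alpha(A)+1/(6\lambda))$ demanded by Proposition 5. With any fixed $\tau\in(1,2]$ the satellite bound $C(A,\lambda)$ is then available.

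The second step implements the greedy selection and coloring. Stratify $A$ dyadically by $A_{(k)}:=\{a\in A:r_A 2^{-k-1}<r_a\leq r_A 2^{-k}\}$ and, working upward in $k$, use Zorn's lemma to pick a maximal subset $\Sigma_{k}\subseteq A_{(k)}$ such that no previously chosen $S(b)$ (with $b\in\Sigma_{j}$, $j\leq k$) contains the new center $a$. Maximality, combined with $a\in S(a)$, forces $A\subseteq\bigcup_{k}\bigcup_{a\in\Sigma_{k}}S(a)$. For the coloring, order $\Sigma:=\bigcup_{k}\Sigma_{k}$ compatibly with $k$ and, for each $a\in\Sigma$, set $\mathcal{J}(a):=\{b\in\Sigma:b<a,\;S(a)\cap S(b)\neq\emptyset\}$. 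Appending $a$ at the end of $\mathcal{J}(a)$ as the central index produces a $\tau$-satellite configuration: diameters decrease up to a factor $\tau=2$ by the dyadic ordering, and the selection rule forbids any earlier $S(a_{i})$ from containing a later $a_{j}$. Proposition 5 then forces $|\mathcal{J}(a)|\leq C(A,\lambda)-1$, so a greedy coloring assigns each $S(a)$ a color in $\{1,\ldots,C(A,\lambda)\}$ distinct from those of $\mathcal{J}(a)$, and the monochromatic classes $A_{1},\ldots,A_{N}$ yield the desired partition.

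The main obstacle is simultaneously matching all four clauses of Definition 7 when promoting $\mathcal{J}(a)\cup\{a\}$ to a genuine satellite configuration. Clause $(4)$ is the delicate one: the requirement $a_{j}\notin S_{i}$ for $i<j$ is exactly what forces the greedy step to exclude any $a$ whose center already lies in a previously chosen $S(b)$, which is why $\Sigma_{k}$ is defined through this exclusion; meanwhile the diameter inequality $\mathrm{diam}\,S_{j}<\tau\,\mathrm{diam}\,S_{i}$ is precisely what the dyadic stratification of the $r_{a}$'s buys. Once these bookkeeping points are settled, the remaining work is purely combinatorial and is supplied by Proposition 5.
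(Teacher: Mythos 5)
Your overall strategy is identical to the paper's: the paper's entire proof of this theorem is the single sentence ``According to Theorem 5.4 of [6] and Proposition 5, we have [the statement]'', so your unpacking of that black box --- the dyadic stratification of $A$ by the radii $r_a$, the maximal (Zorn) selection of centers excluding any $a$ already lying in a previously chosen $S(b)$, the promotion of $\mathcal{J}(a)\cup\{a\}$ to a $\tau$-satellite configuration with $\tau=2$, and the greedy coloring with $C(A,\lambda)$ colors --- is a faithful rendering of the scheme the paper merely cites, and that combinatorial part of your argument is sound.

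The one place where you attempt more than the paper does, namely verifying that Proposition 5 actually applies to $\{S(a):a\in A\}$, contains a genuine gap. From $B(a,r)\subset S(a)$ and $\mathrm{diam}\,S(a)<r_A/(8\alpha(A))$ you can only conclude $2r\leqslant \mathrm{diam}\,S(a)<r_A/(8\alpha(A))$, i.e.\ $r<r_A/(16\alpha(A))$; the other inclusion $S(a)\subset B(a,\lambda r)$ gives an \emph{upper} bound $\mathrm{diam}\,S(a)\leqslant 2\lambda r$ and so cannot insert a factor of $\lambda$ into the denominator, hence your intermediate claim $r<r_A/(16\lambda\alpha(A))$ does not follow. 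More seriously, Proposition 5 requires $r<r_A/\bigl(32\lambda^{2}\alpha(A)+\tfrac{1}{6\lambda}\bigr)$, which for large $\lambda$ is strictly smaller than both $r_A/(16\alpha(A))$ and the theorem's hypothesis $r<r_A/(4\lambda+1)$; and ``shrinking the inscribed ball'' from $r$ to $r'<r$ does not repair this, because preserving $S(a)\subset B(a,\lambda' r')$ forces $\lambda'=\lambda r/r'>\lambda$, so the threshold you must meet recedes faster than you approach it. To be fair, this mismatch of constants is already present in the paper, which passes from Theorem 3's hypotheses to Proposition 5 without comment; but as written your reduction is not closed, and one must either strengthen the radius hypothesis of the theorem to match Proposition 5 or reprove Proposition 5 under the weaker bound.
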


\section{Relationship between BCP and WBCP}
 \qquad In this section,we recall some known facts of the relationship between BCP and WBCP in metric  spaces and then give a sufficient condition for the equivalence of BCP and WBCP.\\
 
\subsection{BCP $\Rightarrow$ WBCP}

\qquad If a metric space $(X,d)$ satisfies BCP, then $(X,d)$ satisfies WBCP with the same constant. This fact is not difficult to obtain and you can refer to [1] Remark 3.3 for a simple proof.\\

WBCP is strictly weaker than BCP. [9] Example 3.4 has shown the existence of a metric space with WBCP not satisfying BCP. \\

\subsection{A sufficient condition for WBCP $ \Rightarrow$ BCP }

\begin{definition}$[9]$ \rm(doubling metric space)	A metric space $(X,d)$ is said to be doubling if there is a constant $C \geqslant 1$ such that for $r>0$, each ball in $(X,d)$ with radius $2r$ can be covered by a family of at most $C$ balls of radius $r$.\\
\end{definition}

\begin{proposition}$[9]$
	\rm Let $(X,d)$	be a doubling metric space. Then $(X,d)$ satisfies BCP if and only if $(X,d)$ satisfies WBCP.
\end{proposition}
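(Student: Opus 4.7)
The plan is to follow essentially the same strategy as the proof of Proposition 2, with the doubling hypothesis replacing the role played there by the Bishop comparison theorem. The direction BCP implies WBCP is already noted in Section 4.1 with the same constant, so only the converse WBCP $\Rightarrow$ BCP requires proof.

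First I would extract the combinatorial content of doubling: by iterating the doubling axiom a bounded number of times, one shows that for every $x \in X$ and every $r > 0$, any strict $r$-net inside $B(x,4r)$ has cardinality at most some constant $C_0$ depending only on the doubling constant $C$. This is exactly the ingredient appearing under the name $C_0(A)$ in Proposition 2, except that it is now uniform on the whole space rather than set-dependent, and no injectivity radius restriction is needed.

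Next I would run the construction of the proof of Proposition 2 essentially verbatim. Given $\mathcal{F} = \{B(x,r) : x \in A, r \leq R\}$, partition it into dyadic scales $\mathcal{F}^i = \{B(x,r) \in \mathcal{F} : r \in (R/2^i, R/2^{i-1}]\}$. At each scale greedily extract at most $C_0 + 1$ subfamilies of pairwise disjoint balls covering those centers at scale $i$ which have not already been absorbed into a ball selected at a coarser scale. The internal Claim of Proposition 2 established the bound $C_0 + 1$ purely from the net estimate, so the same argument works here. Denote by $\mathcal{F}'$ the union of all these disjoint subfamilies over all scales; by construction $\chi_S \leq \sum_{B \in \mathcal{F}'} \chi_B$.

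The overlap bound at a fixed $y$ is then handled exactly as in the last paragraph of the proof of Proposition 2: within a single scale at most $C_0 + 1$ selected balls contain $y$ (one per disjoint subfamily), while across distinct scales the cross-scale greedy selection guarantees that the selected balls containing $y$, one per contributing scale, form a Besicovitch family, so WBCP bounds the number of contributing scales by $L$. Combining yields the BCP constant $C_2 = L(C_0 + 1) + 1$.

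The one step that requires genuine verification, and which I expect to be the only real obstacle, is checking that the cross-scale Besicovitch property of the selected balls containing $y$ follows from the greedy construction in a general metric space, since the original proof of Proposition 2 is phrased in the Riemannian setting. This amounts to observing that when a ball $B(x,r)$ at scale $n$ fails to be added, it intersects some previously chosen $B(x_i, r_i)$ at a coarser scale with $r_i > r/\alpha$, forcing $x \in B(x_i, r_i)$ and $x_i \notin B(x, r)$ up to the mutual exclusion required by Definition 1; this step uses only the metric and the greedy rule, so no Riemannian input is needed and the argument transfers to any doubling metric space without modification.
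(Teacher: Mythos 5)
The paper offers no proof of this proposition at all: it is stated as a quoted result from reference [9] (Le Donne--Rigot), so there is nothing internal to compare your argument against except the neighbouring Proposition 2, which is exactly the template you chose. Your route is sound and essentially self-contained: the only place the Riemannian structure enters the proof of Proposition 2 is through the Bishop comparison bound on the cardinality of a strict $r$-net in $B(x,4r)$, and you correctly replace this by iterating the doubling axiom (three iterations cover $B(x,4r)$ by at most $C^3$ balls of radius $r/2$, each of which meets at most one point of a strict $r$-net). With that substitution the dyadic scale decomposition, the within-scale greedy extraction of at most $C_0+1$ disjoint subfamilies, and the cross-scale overlap count via WBCP all go through verbatim, and you even gain something over Proposition 2, namely uniformity in $A$ and the removal of the injectivity-radius cap on the radii. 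This is a legitimate proof of the nontrivial direction, and arguably a useful addition since the paper leaves the reader to chase the reference.

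One caution about your final paragraph, which is where you locate the ``only real obstacle.'' You justify the cross-scale Besicovitch property by saying that a ball which fails to be added at scale $n$ intersects a previously chosen coarser ball $B(x_i,r_i)$ with $r_i>r/\alpha$, ``forcing $x\in B(x_i,r_i)$.'' Mere intersection with a larger ball does not force the center to lie inside it, so as stated that implication is false; you have imported the $\alpha$-configuration reasoning from Proposition 1, which serves a different purpose. The property you actually need holds for a simpler reason, directly from the cross-scale greedy rule of Proposition 2: a center is discarded at scale $k$ precisely when it already lies in a selected coarser ball, so every retained center $x_k$ satisfies $x_k\notin B(x_i,r_i)$ for all previously selected balls, and then $d(x_i,x_k)>r_i\geqslant r_k$ gives $x_i\notin B(x_k,r_k)$ as well. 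With that correction the selected balls containing a fixed point $y$, one per contributing scale, do form a Besicovitch family, and the bound $L(C_0+1)$ follows. So the gap you anticipated is not a gap, but the justification you sketched for closing it is the wrong one and should be replaced by the appeal to the removal rule itself.
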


    Here we give a different class of metric spaces which satisfy BCP if and only if they satisfy WBCP. Next we give a property that is named here for the first time as far as I know.\\

\begin{definition}\rm(contraction intersecting property)
	 $(X,d)$	is a metric space and $m$ is a positive integer, we say that $(X,d)$ has the \it contraction intersecting property \rm if and only if there exists a constant $\beta\in (0,1)$ such that for any collection of closed balls ${\cal F}$ ${ = \{ B(x,r):\beta R \le r \le R\}}$ with $R>0$, we can find two constants $s(m)\in (0,1)$ and $C(m)\in\mathbb{N}$ such that 
	if \[\mathop  \cap \limits_{i = 1}^{C(m) + 1} B{({{{x}}_i},{{r}}_i)} \ne \emptyset, \]  there exists $m+1$ indexes $\{ {j_1},{j_2}, \cdots ,{j_{m + 1}}\}$ of $\{ 1,2, \cdots ,C(m)+1\}$ such that 
	
	\[\mathop  \cap \limits_{i = 1}^{m+ 1} B{({{{x}}_{j_i}},{s(m)\cdot{r}}_{j_i})} \ne \emptyset. \]\\
\end{definition}

\begin{example}
	\rm For  $({\mathbb{R}^n},\left\|  \cdot  \right\|)$ with \[\left\| {({x_1},{x_2}, \cdots ,{x_n})} \right\| = \sqrt{\sum\limits_{i = 1}^n {x_i^2} }, \] we can prove that $C(m)=2m$.
\end{example}
$ Proof: $ We set $n=2$ for simplification.\\

Suppose that every ball of ${\cal F}$ shares the same radius because $\beta$ can be closed to 1.\\

Now we consider a limit condition such that\[\mathop  \cap \limits_{i = 1}^{2m + 1} B{({{{x}}_i},{{r}}_i)} = {\left\{{\left.0 \right\}} \right.}\]  and the original point${\left\{{\left.0 \right\}} \right.}$ is exactly on the boundary of each ball.\\

 Thus we get $2m+1$ tangent lines at the original point. Since there are finite lines, we can obtain them one by one. Before we obtain the first line, assign 0 to the whole plane. Once we obtain a tangent line, add one to the half-plane where the corresponding ball lies. After we set all lines, the whole plane are divided to $4m+2$ parts (maybe some parts are $\emptyset$) and each of them has a value of ${{\{0,1,2,}} \cdots {{,2m + 1\} }}$.\\
 
 At each step, we add one to a series of connected $2m+1$ parts. Then every part and the part which is $2m$ parts apart from it can not be added one at the same step, thus the sum of the values of these two parts is exactly $2m+1$ and one of them has the value larger than $m$. We note that they shares the same angle and then half of the plane has a value larger than $m$. Since the half of the plane consists of $2m+1$ parts, there must be one\\\\ part with an angle larger than  $\ \dfrac{\pi }{{2m + 1}}$.\\

 Actually if the value of one of the parts is larger than $m$, the small area which is around the original point and lies in this part belongs to at least $m+1$ balls. We choose a part with an angle larger than $\ \dfrac{\pi }{{2m + 1}}$ and the value of it is larger than $m$. In this part, we can find an area which still belongs to at least $m+1$ balls after we reduce the radius of all balls.\\
 
If $n > 2$, we can use the same method. Just replace $4m+2$ with $2k$ and the half-space consists of k parts.\\

\begin{theorem}
	\rm $(X,d)$ is a metric spaces with the contraction intersecting property, if there exists a Radon measure $\mu$ such that \[\mathop {\inf }\limits_{x \in X} \mu (B(x,r)) = \sigma (r) > 0\] for $\forall r > 0$, then BCP and WBCP are equivalent in $(X,d)$. 	
\end{theorem}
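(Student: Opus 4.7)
The plan is to adapt the dyadic-scale strategy from the proof of Proposition 2 to the abstract setting, with the contraction intersecting property and the Radon measure $\mu$ playing the roles that Bishop's comparison theorem played in the Riemannian case. The direction BCP $\Rightarrow$ WBCP with the same constant is immediate (Section 4.1), so I need only prove WBCP $\Rightarrow$ BCP. Let $L$ denote the WBCP constant and $\beta\in(0,1)$, $C(\cdot)$, $s(\cdot)$ the constants furnished by the contraction intersecting property.

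Given a family $\mathcal{F}=\{B(x,r(x)):x\in A,\ r(x)\leq R\}$, I partition it into scales $\mathcal{L}_n=\{B(x,r)\in\mathcal{F}:r\in(\beta^{n}R,\beta^{n-1}R]\}$. Inside each scale I run the greedy construction of Proposition 2 to produce disjoint subfamilies $\mathcal{F}^{n}_1,\ldots,\mathcal{F}^{n}_{t_n}$ whose union covers every centre of $\mathcal{L}_n$, and set $\mathcal{F}'=\bigcup_{n,j}\mathcal{F}^{n}_j$. To obtain BCP it suffices to bound (i) the number $t_n$ of subfamilies per scale and (ii) the number of scales that can contribute balls through a fixed point $y$.

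For (i), if $t_n\geq k$ then, exactly as in Proposition 2, one extracts balls $B(x_1,r_1),\ldots,B(x_{k-1},r_{k-1})$ from $\mathcal{L}_n$ whose centres form a $\beta^{n}R$-net inside $B(x,2\beta^{n-1}R)$ and whose enlargements $B(x_j,r+r_j)$ share the common point $x$. The enlarged radii live in a single $\beta$-scale, so the contraction intersecting property applies; iterating it, I pass to successively smaller subfamilies whose radii have been shrunk by factors $s(m_1),\ s(m_1)s(m_2),\ldots$ while still sharing a common point. After finitely many iterations the shrunken radii fall below the net-separation $\beta^{n}R$, at which moment the net condition forces the shrunken family to be genuinely Besicovitch, and WBCP caps its cardinality by $L$. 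Unwinding the iteration then yields a universal bound $t_n\leq T(L,C(\cdot),s(\cdot),\beta)$. The measure hypothesis $\sigma(r)>0$ enters here to ensure the \emph{a priori} finiteness of every net inside a bounded region (so that the greedy maxima are attained), via the elementary estimate $\#(\text{net})\cdot\sigma(\tfrac{1}{2}\beta^{n}R)\leq\mu(\text{ambient ball})<\infty$ guaranteed by local finiteness of $\mu$.

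For (ii), the cross-scale analysis of Proposition 2 transfers verbatim: if $y$ lies in selected balls from distinct scales $n_1<\cdots<n_p$, the corresponding centres satisfy $d(x_{n_i},x_{n_j})>r_{n_i}>r_{n_j}$, hence form a Besicovitch family of cardinality $p$, and WBCP gives $p\leq L$. Combining (i) and (ii) one obtains $\chi_A\leq\sum_{B\in\mathcal{F}'}\chi_B\leq L\cdot T$, which is BCP. The main obstacle I anticipate is step (i): a single application of the contraction intersecting property only produces a common point for the \emph{shrunken} balls, so it is not clear a priori that shrinking once gives enough separation for a Besicovitch configuration; the iterative shrinking, with a careful choice of iteration depth to drive $\prod_k s(m_k)$ below $\beta$, is the delicate technical point of the argument.
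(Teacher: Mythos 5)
Your scale decomposition and your cross-scale step (ii) match the paper exactly: the paper also restricts each scale to the centres not yet covered by earlier scales, observes that balls through a fixed $y$ from distinct scales form a Besicovitch family, and so bounds the number of contributing scales by the WBCP constant $L$. The measure hypothesis is likewise used only for finiteness (termination of the greedy selection inside each bounded scale), as you say. The problem is step (i).

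Your within-scale argument imports the structure of Proposition 2 (disjoint subfamilies, whose number is controlled by the cardinality of an $r$-net in a comparable ball) and then tries to bound that net by iterating the contraction intersecting property. This has a genuine gap. Each application of the property with parameter $m$ consumes $C(m)+1$ balls and returns only $m+1$ balls shrunk by $s(m)$; to iterate down to a family of size $L+1$ you are forced into a chain $m_K=L$, $m_{K-1}=C(L)$, $m_{K-2}=C(C(L))$, and so on, and the total shrink factor is $\prod_{j}s(m_j)$ with the $m_j$ dictated by the chain. Nothing in Definition 10 bounds the $s(m)$ away from $1$, so this product need not ever drop below the threshold $\beta/2$ you need for the shrunken radii to fall under the net separation; the iteration may simply never produce a Besicovitch family, and no bound on $t_n$ results. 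The paper sidesteps the issue by choosing the within-scale balls greedily with the radius condition $r^i_k>s\cdot\sup\{r(x):x\in A_i\setminus\bigcup_{j<k}B^i_j\}$. This single modification guarantees $d(x_t,x_j)>s\max\{r_t,r_j\}$ for all selected pairs, so the $s$-shrunken selected balls are automatically centre-separated and any subfamily of them with a common point is Besicovitch, hence of size at most $L$. One application of the contraction intersecting property with $m=L$ then shows that no point can lie in more than $C(L)$ of the original selected balls, giving the uniform per-scale overlap bound $C_2=C(L)$ directly, with no net-counting and no iteration. You should replace your step (i) with this construction; the rest of your outline then assembles into the bound $\chi_A\leq\sum_{B\in\mathcal{F}'}\chi_B\leq L\cdot C(L)$ as in the paper.
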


\begin{proof}[\textbf{\textit{Proof}}]
	We just need to prove that $(X,d)$ satisfies the BCP if it satisfies the WBCP with a constant $C_1$.
	Given $R,\ \beta$ as in the Definition 10 and A is a bounded set in X. 
	\[{\cal F}= \{ B(x,r):x \in A,0 < r \le R\} \]
	and each point of $A$ is the center of some ball of $\mathcal{F}$.
	\[{D^n} = \{ B(x,r)\in\mathcal {F},{\beta ^n}R \leqslant r \leqslant {\beta ^{n - 1}}R\} \]
	and ${E^{\text{n}}}$ denotes the centers of the balls in ${D^n}$.
	Thus ${\cal F}$=$\bigcup\limits_{n = 1}^\infty  {{D^n}} $ and $A = \bigcup\limits_{n = 1}^\infty  {{E^n}}$.
	
	Set 
	\[{\mathcal{F}^{\text{1}}}{\text{ = }}{D^{\text{1}}},\quad {A_{\text{1}}}{\text{ = }}{E^{\text{1}}},\]
	\[{A_{\text{2}}}{\text{ = }}{E^{\text{2}}} - \bigcup\limits_{B \in {\mathcal{F}^{\text{1}}}}^{} B ,\quad {\mathcal{F}^{\text{2}}}{\text{ =}} \{ B(x,r)\in\mathcal{F} :x \in {A_{\text{2}}}\},\]
	inductively, \[{A_{{k}}} = {E^k} - \bigcup\limits_{i = 1}^{k - 1} {\bigcup\limits_{B \in {\mathcal{F}^{{i}}}}}B,\quad {\mathcal{F}^{{k}}}{\text{ =}} \{ B(x,r)\in\mathcal{F} :x \in {A_{\text{k}}}\}.\]
	
    Then we can deduce that if a pair of balls are respectively from distinct $\mathcal{F}^{{i}}$ and $\mathcal{F}^{{j}}$ , the center of each of them does not belong to the other. 
    
    Thus for an arbitrary point $y\in X$, $\mathcal{F}_{y}$ denots all balls from $\mathcal{F}$ which contains $y$ and there are at most $C_{1}$ indexes  $\{ {y_1},{y_{2,}} \cdots ,{y_n}\}$  such that \[{{\cal F}^{{y_i}}} \cap {{\cal F}_y} \ne \emptyset .\]
    
    \!\!\!\!\!\!\!\!\it \textbf{Claim 1:} For each $i$, there exists a collection of balls $\{ {B^i_j} =B({x^i_j},{r^i_j})\} _{j = 1}^{{k(i)}}$ such that 
    ${A_{i}}$ is covered by
    \\\\ the balls of $\{ {B_j(i)}\} _{j = 1}^{{k(i)}}$ and $y$ belongs to at most $C_{2}$ balls of $\{ {B_j(i)}\} _{j = 1}^{{k(i)}}$.\\
    
    \!\!\!\!\!\!\!\!proof of claim 1:   \rm Fix    $0<\alpha<1$, $C_{2}>0$, $0<s<1$ and $0<\beta<1$. ( Here $s=s(m)$ is as in the definition 10 for $m=C_{1}$ and $C_{2}$ denotes $C(m)$. )
    \[{\mathcal{F}^{i}}= \{ B(x,r):x \in A_{i} , {\beta ^i}R \leqslant r \leqslant {\beta ^{i - 1}}R\}\]
    We choose \[B^i_1=B({x^i_1},{r^i_1})\in {\mathcal{F}^{i}} \quad with\quad r^i_1>sR,\] 
    inductively,
    \[B^i_{k}=B({x^i_k},{r^i_k})\in {\mathcal{F}^{i}} \quad with\quad {x^i_k}\in {A_i} - \bigcup\limits_{j = 1}^{k - 1}{{B^i_j}} \quad and\]
    \[{r^i_k} > s \cdot sup\{ r(x):B(x,r(x)) \in {\mathcal{F}^i}\ with\ x \in {A_i} - \bigcup\limits_{j = 1}^{k - 1} {{B^i_j}} \}.\]
    
    \!\!\!\!\!\!\!\!\it \textbf{Claim 2:} There exists a ${k_i}$ such that ${A_i}$ is covered by $\bigcup\limits_{j = 1}^{k_{i}} {{B^i_j}}.$
    
    \!\!\!\!\!\!\!\!proof of claim 2:     \rm Since $A_i$ is bounded and $R$ is finite, \[\mu (\bigcup\limits_{B \in {\mathcal{F}_i}}^{} {B) \  is\  finite} \quad for\quad \forall\  Radon\ measure\   \mu \  on\  (X,d).\]
    
    For $\forall$ $j>t$, we have 
    $r_{t}>sr_{j}\ and\ \rho({x_t},{x_j})>r_{t}>sr_{j}.$ Then \[\rho({x_t},{x_j})>s\ max\{r_{t},r_{j}\}>\dfrac{s}{2}{r_{t}}+\dfrac{s}{2}{r_{j}} . \]
    
    Thus the balls in $\{ B({x_t},\dfrac{s}{2}{\beta ^i}R)\} _{t = 1}^{k_i}$ are pairwise disjoint and then \[k_i \leqslant \dfrac{\mu (\bigcup\limits_{B \in {\mathcal{F}_i}}^{} {B})}{\mathop {\inf }\limits_{x \in X} \mu (B(x,\dfrac{s}{2}{\beta ^i}))}\mathop . \]
    
    This completes the proof of Claim 2.
  \\\\
  \textit{continuation of the proof of Claim 1:}
  \rm Since $\rho({x_t},{x_j})>s\ max\{r_{t},r_{j}\}$, then $\{ B({x_t},s{r_t})\} _{t = 1}^{k_i}$ is a Besicovitch family of $(X,d)$. Thus \[\sum\limits_{t = 1}^{{k_i}} {{\chi _{B({x_t},s{r_t})}} \leqslant {C_1}=m}. \]
  
  If for an arbitrary family $\{ B({x_i},{r_i})\} _{i = 1}^{k}\subset \mathcal{F}\  cover\  A_{i},$ there always exists a $y$ such that
  \[\sum\limits_{i = 1}^{k} {{\chi _{B({x_i},{r_i})}}(y) > C_2=C(m)}.\]
  
  Then for the family $B({x^i_j},{r^i_j})\} _{j = 1}^{k_i}\subset \mathcal{F}\  cover\  A_{i}$, there exists a $y$ such that
  \[\sum\limits_{i = 1}^{k_i} {{\chi _{B({x^i_j},{r^i_j})}}(y) > C(m)}.\]
  
  Thus from the definition of the contraction intersecting property we can find a $z$ such that\[\sum\limits_{j = 1}^{k_i} {{\chi _{B({x^i_j},s{r^i_j})}}(z) > m},\]
  which deduces the contradiction.\\

  Then there exists a family $B({x_j(i)},{r_j(i)})\} _{j = 1}^{k(i)}\subset \mathcal{F}\  cover\  A_{i}$ such that \[\sum\limits_{i = 1}^{k(i)} {{\chi _{B({x_j(i)},{r_j(i)})}} \leqslant C_2}.\]This completes the proof of Claim 1.
  \\\\
  \textit{continuation of the proof of Theorem 4 :}
  From Claim 1, we can find a family $\{ {B_j(i)} =B({x_j(i)},{r_j(i)})\} _{j = 1}^{{k(i)}}$ such that 
  ${A_{i}}$ is covered by the balls of $\{ {B_j(i)}\} _{j = 1}^{{k(i)}}$ and an arbitrary $y \in X$ belongs to at most $C_{2}$ balls of $\{ {B_j(i)}\} _{j = 1}^{{k(i)}}$ for each $i$. ${\cal F} (i)$ denotes $\{ {B_j(i)} \} _{j = 1}^{{k(i)}}$ from claim 1 for each $i$. We have known that there are at most $C_{1}$ indexes  $\{ {y_1},{y_{2,}} \cdots ,{y_n}\}$  such that \[{{\cal F}^{{y_i}}} \cap {{\cal F}_y} \ne \emptyset.\] Thus if $\mathcal F'$ denotes $\bigcup\limits_{i = 1}^\infty  {{{\cal F}_i}} $, then $\mathcal F'$ is a subcollection of $\mathcal F$ and satisfies \[{\chi _A} \leqslant \sum\limits_{B \in \mathcal{F}'} {{\chi _B}}  \leqslant {C_1}{C_2}.\] 
\end{proof} 
 
\section{The best constants}
In $({\mathbb{R}^n},\left\|  \cdot  \right\|)$ with \[\left\| {({x_1},{x_2}, \cdots ,{x_n})} \right\| = \sqrt {\sum\limits_{i = 1}^n {x_i^2} }, \] the best constant of Besicovitch covering theorem is difficult to obtain when $n>1$. And the investigations of this question is interesting when they are related to geometric problems. Refer to [2],[7],[12] and [25] for the intersting investigations.\\

\begin{notation}
	\rm Here are some symbols in $({\mathbb{R}^n},\left\|  \cdot  \right\|)$ to be  defined.\\\\
    $w(n)$ denotes the best constant of the weak Besicovitch covering property(WBCP) in $({\mathbb{R}^n},\left\|  \cdot  \right\|)$. \\\\
    $H^*(n)$: The strict Hadwiger number $H^*(n)$ is the largest number of (closed) unit balls in $({\mathbb{R}^n},\left\|  \cdot  \right\|)$ that can touch the (closed) unit ball with center in the original point and these unit balls are disjoint.\\\\
	$K(n)$ denotes the largest number of balls that can form a configuration in $({\mathbb{R}^n},\left\|  \cdot  \right\|)$ while each ball of the configuration intersects every other ball but does not contain the center of any other ball.\\\\
	$\alpha(n)$ denotes the best constant of Besicovitch covering theorem in $({\mathbb{R}^n},\left\|  \cdot  \right\|)$. \\\\
	$\beta(n)$ denotes the largest number of (open) unit balls in $({\mathbb{R}^n},\left\|  \cdot  \right\|)$ that can be packed into a ball of radius $5$ with the requirement that one of their centers is the center the ball of radius $5$.
	
\end{notation}

The references have showed that \[w(n) \leqslant K(n)\leqslant\alpha(n)\leqslant\beta(n)\leqslant5^n.\]

Here are the best ranges of the above numbers so far.   \\

\begin{tabular}{|c|c|c|c|c|c|}
	\hline
	&$n=1$   &$n=2$  &$n=3$  &$n=4$  & ${\text{n}} \to \infty $ \\ 
	\hline 
	$w(n)=H^*(n)$& $2$  & $5$ &$12$  & $24$ & $[({1.1547+o(1))}^n,1.3205^{(1+o(1)n}]$ \\ 
	\hline 
	$K(n)$&$2$  &$[8,11]$  &  &  &$[{1.25}^n,{(1.887+o(1))^n}]$  \\ 
	\hline 
	$\alpha(n)$& 2 & $[8,19]$ &$[12,87]$  & $[24,331]$ &$[{1.25}^n,{(2.641+o(1))^n}]$   \\ 
	\hline 
	$\beta(n)$& 5 & 19 & $[67,87]$ &  $[226,331]$&$[{(2.065+o(1))^n},{(2.641+o(1))^n}]$  \\ 
	\hline 
\end{tabular} \\\\

Although someone has mentioned that the best constant of the Besicovitch covering theorem in $({\mathbb{R}^1},\left\|  \cdot  \right\|)$ is 2, I have never seen an exact proof of it when the set $A$ in the thoerem is an arbitrary unbounded set. Now we give a proof of it here. \\

\begin{theorem} \rm(Besicovitch covering theorem in $({\mathbb{R}^1},\left\|  \cdot  \right\|)$)
	\rm 	$A$ is an arbitrary subset of ${\mathbb{R}^1}$ and $R$ is a positive real number. For every collection of closed intervals \[{\cal F}=\{[x-r,x+r]: x\in A, r \leqslant R\}\] such that each point of $A$ is the center of some interval of $\mathcal{F}$,
	we can find 2 subcollections  ${\mathcal{F}_1}$ and ${\mathcal{F}_2}$ of disjoint intervals such that\[A \subset \bigcup\limits_{i = 1,2}^{} {\bigcup\limits_{I \in {\mathcal{F}_i}} I }.\]
\end{theorem}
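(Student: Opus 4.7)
The plan is to produce a subfamily $\mathcal{F}_0 \subset \mathcal{F}$ that covers $A$ and has the pairwise Besicovitch property (no center of one interval of $\mathcal{F}_0$ lies in another), and then to two-color $\mathcal{F}_0$ into the disjoint subfamilies $\mathcal{F}_1$ and $\mathcal{F}_2$. First, by the axiom of choice, I would fix for each $x \in A$ a single interval $I_x = [x - r_x, x + r_x] \in \mathcal{F}$, so that it suffices to work with $\{I_x : x \in A\}$.

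For the extraction, I would decompose by radius scales: fix $\lambda \in (0, 1)$ close to $1$ and set $\mathcal{D}_n = \{I_x : \lambda^n R < r_x \leq \lambda^{n-1} R\}$ for $n \geq 1$. I would process the stages $n = 1, 2, \dots$ in order, maintaining a running family $\mathcal{G}_{\leq n}$ that is pairwise Besicovitch and covers every scale-$\leq n$ center already examined. At stage $n$ I restrict to scale-$n$ centers not yet covered by $\bigcup \mathcal{G}_{\leq n-1}$ and use a Zorn-type maximality argument to select a subfamily whose centers are sufficiently separated for the Besicovitch condition to hold both inside the stage and against $\mathcal{G}_{\leq n-1}$, while still covering all remaining scale-$n$ centers; the narrow radius band at each scale and the one-dimensional order are what make this possible.

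Once $\mathcal{F}_0$ is in hand, two order-theoretic facts about $\mathbb{R}$ finish the argument. First, $\mathcal{F}_0$ has multiplicity at most $2$: if three intervals $I_{x_1}, I_{x_2}, I_{x_3} \in \mathcal{F}_0$ with $x_1 < x_2 < x_3$ shared a common point $y$, then $y \geq x_2$ forces $x_2 \in [x_1, x_1 + r_{x_1}] \subset I_{x_1}$, and $y < x_2$ forces $x_2 \in [x_3 - r_{x_3}, x_3] \subset I_{x_3}$, both contradicting the pairwise Besicovitch property. Second, by Helly's theorem in dimension one, three pairwise intersecting closed intervals have a common point, so the intersection graph of $\mathcal{F}_0$ is triangle-free; as an interval graph it is perfect, hence its chromatic number equals its clique number, which is at most $2$. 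A proper 2-coloring splits $\mathcal{F}_0$ into two disjoint families $\mathcal{F}_1$ and $\mathcal{F}_2$, as required.

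The main obstacle is the extraction step: obtaining a subfamily that simultaneously covers $A$ and has the pairwise Besicovitch property. The tension is that a naive Vitali-type selection at each scale recovers coverage only after a radius blowup, so the separation threshold and the scale ratio $\lambda$ have to be tuned carefully, and cross-scale interactions must be controlled so that the pairwise Besicovitch condition survives when intervals from many scales are assembled into the single family $\mathcal{F}_0$.
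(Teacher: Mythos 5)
Your second half is sound and is genuinely slicker than the paper's argument: once you have a covering subfamily $\mathcal{F}_0$ with the full two-sided Besicovitch property ($x \notin I_y$ and $y \notin I_x$ for all pairs), the multiplicity-$2$ argument via the ordering of three centers is correct, and in fact one can check directly that each interval of $\mathcal{F}_0$ meets at most one other interval on each side (two right-neighbours $y_1 < y_2$ of $I_x$ would force $y_1 \in I_{y_2}$), so the intersection graph is a disjoint union of paths and the $2$-colouring is immediate, with no need to invoke perfection or compactness for infinite families. The paper instead runs a greedy selection with factor $\tfrac12$, which yields only the one-sided property $x_j \notin I_{x_i}$ for $j>i$ together with $r_j < 2r_i$, and then maintains the two disjoint subfamilies by an explicit bookkeeping of chain patterns, recolouring earlier chains when a new interval is inserted.

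The genuine gap is exactly the step you flag as "the main obstacle": you never actually produce $\mathcal{F}_0$. The difficulty is not a matter of tuning $\lambda$ and the separation threshold; it is structural, and it already occurs \emph{inside a single radius band}. Suppose all radii lie in $(\lambda^n R, \lambda^{n-1}R]$ and you take a maximal subfamily whose centers satisfy the Besicovitch separation $|x-y| > \max(r_x,r_y)$. A non-selected center $a$ is then blocked by some selected $x$ with $|a-x| \le \max(r_a,r_x)$; but if $r_x < |a-x| \le r_a$, this means $x \in I_a$ while $a \notin I_x$, so $a$ is \emph{not} covered by $I_x$ and is not covered at all. No choice of $\lambda < 1$ removes this case, because the two radii involved can be arbitrarily close. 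The standard cure — selecting greedily in decreasing order of radius, so that the blocking interval always has the larger radius and therefore contains $a$ — is not available as stated, since an arbitrary set of radii admits no decreasing well-ordering. So the extraction claim (that a covering subfamily with the two-sided property always exists) is plausible and, I believe, true in $\mathbb{R}^1$, but it is the entire content of the theorem and your proposal contains no proof of it; the Vitali-type scheme you describe fails on the case just indicated. You would need either a different selection principle (for instance, iteratively selecting the centers covered by no other available interval, which are automatically pairwise compatible, and showing this transfinite process exhausts $A$) or you should fall back on the one-sided greedy selection and then do the combinatorial repair work on the $2$-colouring, which is what the paper's proof does. Separately, the reduction of unbounded $A$ to bounded pieces is not addressed beyond the generic scale decomposition, whereas the paper devotes its Condition 3 to it.
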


\begin{proof}
	\rm
	\textbf{Conditon 1:} $A$ is compact.
	
	Then we can choose finite subcollections $\left\{ {{I_i}} \right\}_{i = 1}^N$ such that $A\subset \bigcup\limits_{i = 1}^N {{I_i}}$ and it is not difficult to choose two subcollections ${\mathcal{F}_1}$ and ${\mathcal{F}_2}$  such that $\left\{ {{I_i}} \right\}_{i = 1}^N={\mathcal{F}_1} \cup {\mathcal{F}_2}$ while each ${\mathcal{F}_i}$ consists of disjoint intervals. Moreover, if [ ] denotes an element in ${\mathcal{F}_1}$ and ( ) denotes \\an element in ${\mathcal{F}_2}$, then the chosen ${\mathcal{F}_1} \cup {\mathcal{F}_2}$ consists of finite chains with a same pattern of ( [ ) ( ] [ ) ( ] [ ) ].\\
	
	\textbf{Conditon 2:} $A$ is bounded.\\
	
	Choose \[I_1=[x_1-r_1,x_1+r_1], \ \ x_1\in A,\ \ r_1>\dfrac{1}{2}sup\{r:[x-r,x+r]\in \cal{F}\},\]
	inductively,\[{I_k} = [{x_k} - {r_k},{x_k} + {r_k}],{x_k} \in A\backslash \bigcup\limits_{i = 1}^{k - 1} {{I_i}},\ {r_i} > \dfrac{1}{2}\sup \{ r:[x - r,x + r] \in \mathcal{F},\ x \in A\backslash \bigcup\limits_{i = 1}^{k - 1} {{I_i}} \}. \]
	
	If there exists a $N$ such that $A\subset \bigcup\limits_{i = 1}^{N} {{I_i}}$, then the condition is similar to condition 1.
	
	Otherwise  $A\subset \bigcup\limits_{i = 1}^{\infty } {{I_i}}$. This is because $\left\{ {{[x_i-\dfrac{1}{2}r_i,x_i+\dfrac{1}{2}r_i]}} \right\}_{i = 1}^{\infty}$ is pairwise disjoint and then  $r_i \to 0\ \ when\ \  i \to\infty.$ If $a\in A$, there must exists an $r>0$ such that $[a-r,a+r]\in \mathcal{F}$. Then we can find a $r_j<\dfrac{1}{2}r$, thus \[a \in \bigcup\limits_{i = 1}^{j - 1} {[{x_i} - {r_i},{x_i} + {r_i}]} .\]

	For a infinite $N$, we can choose two subcollections ${\mathcal{F}_1}$ and ${\mathcal{F}_2}$ such that $\left\{ {{I_i}} \right\}_{i = 1}^N={\mathcal{F}_1} \cup {\mathcal{F}_2}$ while each ${\mathcal{F}_i}$ consists of disjoint intervals from$\left\{ {{I_i}} \right\}_{i = 1}^N.$  Moreover, if [ ] denotes an element in ${\mathcal{F}_1}$ and ( ) denotes an element in ${\mathcal{F}_2}$, then the chosen ${\mathcal{F}_1} \cup {\mathcal{F}_2}$ consists of finite chains with a same  pattern of 
	( [ ) ( ] [ ) ( ] [ ) ].
	
	Then for the $I_{N+1}$, we have known that $x_{N+1}  \notin \bigcup\limits_{i = 1}^{N } {{I_i}}$ and $r_{N+1}<2r_i$ for each $i<N+1$. Thus $I_{N+1}$ intersects at most 4 intervals in $\bigcup\limits_{i = 1}^{N } {{I_i}}.$ 
	
	Next, \{ \} denotes $I_{N+1}$.
	$\bigcup\limits_{i = 1}^{N } {{I_i}}$ consists of finite chains with a pattern of 
	( [ ) ( ] [ ) ] and there are only two ways of how the leftside of $I_{N+1}$ intersecting some chain in $\bigcup\limits_{i = 1}^{N } {{I_i}}$:
	
	\qquad\qquad\qquad( [ ) ( ] [ ) ( ] [ ) \{ ] \}
	and ( [ ) ( ] [ ) ( ] [ \{ )  ] \}.\\
	
	In the first conditon,  $I_{N+1}$ can be added to ${\mathcal{F}_1}$. In the second condition, $I_{N+1}$ can be added to ${\mathcal{F}_2}$ after remove the [ ] intersecting $I_{N+1}$. 
	
	If $I_{N+1}$ intersects two chains in $\bigcup\limits_{i = 1}^{N } {{I_i}}$, we can exchange the elements of ${\mathcal{F}_1}$ and ${\mathcal{F}_2}$ in some chain, if necessary, to ensure the intervals in every ${\mathcal{F}_i}$ are disjoint after adding $I_{N+1}$ to one of them in a way as above.
	\\
	
	\textbf{Conditon 3:} $A$ is unbounded.\\
	
	From an idea in [17]2.2.8, we can deduce a fact that there is a subcollection ${\mathcal{R}} \subset {\mathcal{F}}$ consisting of at most countable disjoint intervals such that for any $[x-r,x+r]\in \mathcal{F}$, we can find an element $[x_i-r_i,x_i+r_i]\in {\mathcal{R}}$ such that\[[x - r,x + r] \cap [{x_i} - {r_i},{x_i} + {r_i}] \ne \emptyset \ \ and\ \ r < \frac{4}{3}{r_i}.\]
	
	\[{\mathcal{F}_i}=\{[x-r,x+r]\in \mathcal{F},[x - r,x + r] \cap [{x_i} - {r_i},{x_i} + {r_i}] \ne \emptyset \ \ and\ \ r < \frac{4}{3}{r_i}\}
	\] \[A_i=\{x:[x - r,x+ r]\in\mathcal{F}_i\}\]
	
	Since ${\mathcal{R}}=\bigcup\limits_{i = 1}^{\infty } {[{x_i} - {r_i},{x_i} + {r_i}]}$ , then ${\mathcal{F}}=\bigcup\limits_{i = 1}^{\infty } {\mathcal {F}_i}$ and $A=\bigcup\limits_{i = 1}^{\infty } A_i$.\\
	
	Now we consider ${\mathcal {F}_i}$ and $A_i$. Since $A_i$ is bounded, we use the procedure in condition 2 to get a family of intervals $\bigcup\limits_{j = 1}^{\infty } {[{x^i_j} - {r^i_j},{x^i_j} + {r^i_j}]}$ cover $A_i$ and set \[ [{x^i_1} - {r^i_1},{x^i_1} + {r^i_1}]= [{x_i} - {r_i},{x_i} + {r_i}].\]
	
	\!\!\!\!\!\!\!\!\it \textbf{Claim :} There exists a $N_i$ such that ${A^i}$ is covered by $\bigcup\limits_{j = 1}^{N_i} {[{x^i_j} - {r^i_j},{x^i_j} + {r^i_j}]}.$
	
	\!\!\!\!\!\!\!\!proof of claim :     
	\rm Otherwise $r_j^i \to 0\ \ when\ \  j \to\infty $.
	
	If all the intervals of $\bigcup\limits_{j = 2}^{N_i} {[{x^i_j} - {r^i_j},{x^i_j} + {r^i_j}]}$ lie in one side of $[{x_i} - {r_i},{x_i} + {r_i}]$, then \[x_j^i < x_2^i - r_2^i\ \ for\ \ \forall\  j > 2.\]
	
	But  $[{x^i_j} - {r^i_j},{x^i_j} + {r^i_j}]\cap [{x_i} - {r_i},{x_i} + {r_i}] \ne \emptyset$ deduces that ${r^i_j}>{r^i_2}\ \ for\ \ \forall\  j > 2$.\\
	which is a contradiction to $r_j^i \to 0\ \  $
	
	This completes the proof of Claim .\\
	
	From the claim, we can choose the leftmost interval and the rightmost interval together with  $[{x_i} - {r_i},{x_i} + {r_i}]$ to cover $A_i$. ${\mathcal{D}^i}$ denotes these three intervals and then $\bigcup\limits_{i = 1}^{\infty} {\mathcal{D}^i}$ covers $A$. 
	
	Then we can use the  similar debate in condition 2 to prove that $\bigcup\limits_{i = 1}^{\infty} {\mathcal{D}^i}$ equals to two subcollections ${\mathcal{F}_1}$ and ${\mathcal{F}_2}$ of disjoint intervals and obviouly ${\mathcal{F}_1} \cup {\mathcal{F}_2}$ still cover $A$.
\end{proof}

\section*{Acknowledgement}
    The author would like to thank her supervisor and some classmates for their useful suggestions while writing this paper.


\begin{thebibliography}{00}
	
	\bibitem{1}Aldaz,J.M. (2019). Besicovitch type properties in metric spaces. Available at the Mathematics ArXiv.
	
	\bibitem{Al1}Aldaz, J.M.(2021) Kissing Numbers and the Centered Maximal Operator.{\em J Geom Anal}.
	
	\bibitem{1}Aimar,H., Forzani,L. (2001).On the Besicovitch Property for Parabolic Balls. {\em J.real analysis exchange}. 27(1):261-267.
	
	\bibitem{}Besicovitch,A.S. (1945). A general form of the covering principle and relative differentiation of additive
	functions. {\em Proc. Cambridge Philos. Soc.} 41, 103–110 
	
	\bibitem{}Besicovitch,A.S. (1946). A general form of the covering principle and relative differentiation of additive
	functions. II. {\em Proc. Camb. Philos. Soc.} 42, 1–10 
	
	\bibitem{1}Bliedtner,J.,Loeb, P.A. (1992). A reduction technique for limit theorems in analysis and probability theory. {\em J. Arkiv Fr Matematik}. 30(1):25-43.
	
	\bibitem{1}Boyvalenkov,P. (1997). On the Besicovitch constant in small dimensions. {\em J. Comptes rendus de l'Academie bulgare des ences: ences mathematiques et naturelles}. 50:17-18.
	
	\bibitem{1}Donne,E.L., Rigot,S. (2014). Besicovitch Covering Property for homogeneous distances in the Heisenberg groups.{\em J. Journal of the European Mathematical Society}. 19(5).
	
	
	\bibitem{1}Donne,E.L., Rigot S. (2016). Besicovitch Covering Property on graded groups and applications to measure differentiation. {\em J.Journal für die reine und angewandte Mathematik (Crelles Journal) }2019(750).
	
	\bibitem{1}Donne,E.L., Rigot S. (2015). Remarks about the Besicovitch Covering Property in Carnot groups of step 3 and higher. {\em J. Proceedings of the American Mathematical Society}.144(5).
	
	\bibitem{}Evans, L.C., Gariepy,R.F. (2015). Measure Theory and Fine Properties of Functions, Revised Edition[M]. 
	
	\bibitem{1}Füredi,Z., Loeb,P. A. (1994). On the best constant for the Besicovitch covering theorem. {\em J.Proceedings of the American Mathematical Society}. 121(4):1063-1073.
	
	\bibitem{1}Federer,H. (1969). Geometric measure theory, Springer-Verlag, New York.
	
	\bibitem{1} Golo,S., Rigot.S. (2019). The Besicovitch covering property in the Heisenberg group revisited. {\em J. The Journal of Geometric Analysis}.29(4):3345–3383.
	
	\bibitem{1}Itoh,T. (2018).The Besicovitch covering theorem for parabolic balls in Euclidean space. {\em J.Hiroshima mathematical journal}. 48(3):279-289.
	
	\bibitem{1}Jost,J., Hong,V.L., Tran,T.T. (2020).Differentiation of measures on complete Riemannian manifolds. Available at the Mathematics ArXiv.
	
	\bibitem{1}Krantz,S.G. (2019). The Besicovitch covering lemma and maximal functions. {\em J. Rocky Mountain Journal of Mathematics}.49(2):539-555.
	
	\bibitem{1}Loeb,P.A. (1989). On the Besicovitch covering theorem. {\em J.SUT Journal of Mathematics}. 25(1).
	
	\bibitem{1}Loeb,P.A. (1993). An optimization of the Besicovitch covering. {\em J.Proceedings of the American Mathematical Society}.118(3):715-716.
	
	\bibitem{1}Loeb,P.A.  (1997).Opening covering theorems of Besicovitch and Morse. {\em J Mathematica Moravica}. Special volume:3-11.
	
	\bibitem{1}Loeb,P.A, Talvila,E. (2001). Covering theorems and Lebesgue integration. {\em J.Scientiae Mathematicae Japonicae}.
	
	\bibitem{1}Morse,A.P. (1947). Perfect blankets. {\em J.Trans. Amer. Math. Soc}. 61, 418–442.
	
	
	\bibitem{1} Preiss,D. (1981). Dimension of metrics and differentiation of measures, General topology and its relations to modern analysis and algebra, V.	
	
	\bibitem{1}Rigot,S. (2004). Counter example to the Besicovitch covering property for some Carnot groups equipped with their Carnot-Carathéodory metric. {\em J. Mathematische Zeitschrift}. 248(4):827-848.
	
    \bibitem{1}Sullivan,J.M. (1994). Sphere packings give an explicit bound for the Besicovitch Covering Theorem. {\em J.Journal of Geometric Analysis}.4(2):219.	
	
	
		
	
\end{thebibliography}
\end{document}